\newtheorem{theorem}{Theorem}
\theoremstyle{plain}
\newtheorem{corollary}{Corollary}
\newtheorem{definition}{Definition}
\newtheorem{lemma}{Lemma}
\numberwithin{equation}{section}
\begin{document}
\title[On generalization of different type inequalities]{On generalization
of different type inequalities for harmonically quasi-convex functions via
fractional integrals}
\author{\.{I}mdat \.{I}\c{s}can}
\address{Department of Mathematics, Faculty of Sciences and Arts, Giresun
University, Giresun, Turkey}
\email{imdat.iscan@giresun.edu.tr}
\subjclass[2000]{ 26A33, 26A51, 26D15. }
\keywords{Hermite--Hadamard inequality, Riemann--Liouville fractional
integral, Ostrowski inequality, Simpson type inequalities, harmonically
quasi-convex function.}

\begin{abstract}
In this paper, we obtained some new estimates on generalization of Hadamard,
Ostrowski and Simpson-like type inequalities for harmonically quasi-convex
functions via Riemann Liouville fractional integral.
\end{abstract}

\maketitle

\section{Introduction}

In this section we will present definitions and some results used in this
paper.

A function $f:I\subseteq \mathbb{R\rightarrow R}$ is said to be a convex
function if 
\begin{equation*}
f(\lambda x+(1-\lambda )y)\leq \lambda f(x)+(1-\lambda )f(y)
\end{equation*}

holds for all $x,y\in I$ and $\lambda \in \lbrack 0,1].$

The notion of quasi-convex functions generalizes the notion of convex
functions. More precisely, a function $f:[u,v]\mathbb{\rightarrow R}$ is
said quasi-convex on $[u,v]$ if 
\begin{equation*}
f\left( \lambda x+(1-\lambda )y\right) \leq \sup \left\{ f(x),f(y)\right\} ,
\end{equation*}%
for any $x,y\in \lbrack u,v]$ and $\lambda \in \left[ 0,1\right] .$ Clearly,
any convex function is a quasi-convex function. Furthermore, there exist
quasi-convex functions which are not \ convex (see \cite{I07}).

Following inequalities are well known in the literature as Hermite-Hadamard
inequality, Ostrowski inequality and Simpson inequality respectively:

\begin{theorem}
Let $f:I\subseteq \mathbb{R\rightarrow R}$ be a convex function defined on
the interval $I$ of real numbers and $u,v\in I$ with $u<v$. The following
double inequality holds%
\begin{equation}
f\left( \frac{u+v}{2}\right) \leq \frac{1}{v-u}\dint\limits_{u}^{v}f(x)dx%
\leq \frac{f(u)+f(v)}{2}\text{.}  \label{1-1}
\end{equation}
\end{theorem}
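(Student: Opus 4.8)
The plan is to prove the two inequalities in \eqref{1-1} separately, in each case reducing the middle term to an integral over $[0,1]$ via the affine substitution $x=tu+(1-t)v$, $t\in[0,1]$, for which $dx=(u-v)\,dt$ and hence
\[
\frac{1}{v-u}\int_{u}^{v}f(x)\,dx=\int_{0}^{1}f\bigl(tu+(1-t)v\bigr)\,dt .
\]
Since $f$ is convex on $I$, it is continuous on the interior of $[u,v]$ and bounded on $[u,v]$, so it is Riemann integrable there and every integral below is well defined.

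For the right-hand inequality I would use convexity pointwise in $t$: for each $t\in[0,1]$,
\[
f\bigl(tu+(1-t)v\bigr)\le t\,f(u)+(1-t)\,f(v).
\]
Integrating this over $t\in[0,1]$ and using $\int_{0}^{1}t\,dt=\int_{0}^{1}(1-t)\,dt=\tfrac12$ together with the identity above gives $\dfrac{1}{v-u}\int_{u}^{v}f(x)\,dx\le\dfrac{f(u)+f(v)}{2}$.

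For the left-hand inequality I would use a symmetrisation argument rather than a supporting-line argument, so as not to assume any differentiability of $f$. For $t\in[0,1]$ put $x_{t}=tu+(1-t)v$ and $y_{t}=(1-t)u+tv$; then $\tfrac{x_{t}+y_{t}}{2}=\tfrac{u+v}{2}$, so convexity yields
\[
f\!\left(\frac{u+v}{2}\right)=f\!\left(\frac{x_{t}+y_{t}}{2}\right)\le\frac{f(x_{t})+f(y_{t})}{2}.
\]
Integrating over $t\in[0,1]$ and observing that $\int_{0}^{1}f(x_{t})\,dt=\int_{0}^{1}f(y_{t})\,dt=\dfrac{1}{v-u}\int_{u}^{v}f(x)\,dx$ (the second equality via the change $t\mapsto 1-t$) gives $f\!\left(\dfrac{u+v}{2}\right)\le\dfrac{1}{v-u}\int_{u}^{v}f(x)\,dx$, completing the proof.

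No step here is a genuine obstacle, this being a classical fact; the only points needing care are justifying the integrability of $f$ and the validity of the affine change of variables, and ensuring the lower bound is obtained without tacitly using differentiability of $f$ (hence the symmetrisation device rather than a subgradient).
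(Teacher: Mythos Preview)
Your argument is correct and is one of the standard proofs of the Hermite--Hadamard inequality. Note, however, that the paper does not supply its own proof of this statement: it is quoted in the introduction as a well-known classical result, so there is no proof in the paper to compare against. Your affine reparametrisation plus the symmetrisation device for the lower bound is entirely sound, and your care about integrability and about avoiding any tacit differentiability assumption is appropriate.
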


\begin{theorem}
Let $f:I\subseteq \mathbb{R\rightarrow R}$ be a mapping differentiable in $%
I^{\circ },$ the interior of I, and let $u,v\in I^{\circ }$ with $u<v.$ If $%
\left\vert f^{\prime }(x)\right\vert \leq M$ for all $x\in \left[ u,v\right]
,$ then the following inequality holds%
\begin{equation*}
\left\vert f(x)-\frac{1}{v-u}\dint\limits_{u}^{v}f(t)dt\right\vert \leq 
\frac{M}{v-u}\left[ \frac{\left( x-u\right) ^{2}+\left( v-x\right) ^{2}}{2}%
\right]
\end{equation*}%
for all $x\in \left[ u,v\right] .$
\end{theorem}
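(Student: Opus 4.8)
The plan is to deduce the inequality from an elementary Lipschitz-type bound on $f$ followed by averaging over $[u,v]$. First I would fix $x\in[u,v]$; since $f$ is differentiable throughout $I^{\circ}$ and $[u,v]\subseteq I$, the fundamental theorem of calculus yields $f(x)-f(t)=\int_{t}^{x}f^{\prime}(s)\,ds$ for every $t\in[u,v]$, so that the hypothesis $\left\vert f^{\prime}\right\vert \le M$ gives the pointwise estimate $\left\vert f(x)-f(t)\right\vert \le M\left\vert x-t\right\vert$.

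Next I would express the quantity to be estimated as a mean of these differences,
\[
f(x)-\frac{1}{v-u}\int_{u}^{v}f(t)\,dt=\frac{1}{v-u}\int_{u}^{v}\bigl(f(x)-f(t)\bigr)\,dt,
\]
take absolute values, and move the modulus inside the integral (the triangle inequality for integrals), obtaining
\[
\left\vert f(x)-\frac{1}{v-u}\int_{u}^{v}f(t)\,dt\right\vert \le\frac{1}{v-u}\int_{u}^{v}\left\vert f(x)-f(t)\right\vert \,dt\le\frac{M}{v-u}\int_{u}^{v}\left\vert x-t\right\vert \,dt.
\]

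Finally I would evaluate the last integral by splitting the range at $t=x$,
\[
\int_{u}^{v}\left\vert x-t\right\vert \,dt=\int_{u}^{x}(x-t)\,dt+\int_{x}^{v}(t-x)\,dt=\frac{(x-u)^{2}}{2}+\frac{(v-x)^{2}}{2},
\]
which gives precisely the asserted bound. An equivalent route uses the Montgomery identity $f(x)-\frac{1}{v-u}\int_{u}^{v}f(t)\,dt=\frac{1}{v-u}\int_{u}^{v}p(t,x)f^{\prime}(t)\,dt$, where $p(t,x)=t-u$ for $t\in[u,x]$ and $p(t,x)=t-v$ for $t\in(x,v]$; this identity follows from integration by parts on each of the two subintervals, and then bounding $\left\vert f^{\prime}\right\vert$ by $M$ and integrating $\left\vert p(\cdot,x)\right\vert$ reproduces the same constant. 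I do not anticipate any serious obstacle here: the only mildly delicate point is justifying the interchange of modulus and integral (respectively the integration by parts), which is immediate from the differentiability of $f$ on a neighbourhood of $[u,v]$.
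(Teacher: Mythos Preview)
Your argument is correct. Both routes you sketch---the direct Lipschitz estimate averaged over $[u,v]$, and the Montgomery-identity version---are standard and complete proofs of the classical Ostrowski inequality; the computation $\int_{u}^{v}|x-t|\,dt=\tfrac{1}{2}\bigl((x-u)^{2}+(v-x)^{2}\bigr)$ is exactly right.

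However, there is nothing to compare against: the paper does not prove this theorem. It is quoted in the introduction, alongside the Hermite--Hadamard and Simpson inequalities, as one of three ``well known'' classical results that motivate the fractional and harmonically quasi-convex generalizations developed later. No proof is supplied or intended. So your proposal is a valid self-contained proof of a statement the paper merely cites.
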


\begin{theorem}
Let $f:\left[ u,v\right] \mathbb{\rightarrow R}$ be a four times
continuously differentiable mapping on $\left( u,v\right) $ and $\left\Vert
f^{(4)}\right\Vert _{\infty }=\underset{x\in \left( u,v\right) }{\sup }%
\left\vert f^{(4)}(x)\right\vert <\infty .$ Then the following inequality
holds:%
\begin{equation*}
\left\vert \frac{1}{3}\left[ \frac{f(u)+f(v)}{2}+2f\left( \frac{u+v}{2}%
\right) \right] -\frac{1}{v-u}\dint\limits_{u}^{v}f(x)dx\right\vert \leq 
\frac{1}{2880}\left\Vert f^{(4)}\right\Vert _{\infty }\left( v-u\right) ^{4}.
\end{equation*}
\end{theorem}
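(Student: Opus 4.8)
The plan is to recognize the left-hand side, after multiplication by $v-u$, as the classical error of Simpson's quadrature rule, and to control it by four successive integrations starting from the fourth derivative. First I would rewrite the bracketed term: since $\frac{1}{3}\left[\frac{f(u)+f(v)}{2}+2f\left(\frac{u+v}{2}\right)\right]=\frac{1}{6}\left[f(u)+4f\left(\frac{u+v}{2}\right)+f(v)\right]$, the asserted inequality is equivalent to
\[
\left\vert \int_{u}^{v}f(x)\,dx-\frac{v-u}{6}\left[f(u)+4f\left(\tfrac{u+v}{2}\right)+f(v)\right]\right\vert \leq \frac{(v-u)^{5}}{2880}\left\Vert f^{(4)}\right\Vert _{\infty },
\]
the subsequent division by $v-u$ then producing the stated form.

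Set $c=\frac{u+v}{2}$ and $h=\frac{v-u}{2}$, so that $[u,v]=[c-h,c+h]$, and introduce the auxiliary function
\[
F(y)=\int_{c-y}^{c+y}f(x)\,dx-\frac{y}{3}\bigl[f(c-y)+4f(c)+f(c+y)\bigr],\qquad y\in[0,h].
\]
Then $F(0)=0$ and $F(h)$ is exactly the quantity to be estimated. The next step is to differentiate $F$ three times; each differentiation combines the Leibniz rule for the moving endpoints with the chain rule inside $f,f',f''$. A short computation shows that the ``boundary'' contributions cancel two of the three emerging terms at each stage, giving $F'(0)=F''(0)=F'''(0)=0$ and, crucially, the compact expression $F'''(y)=-\frac{y}{3}\bigl[f'''(c+y)-f'''(c-y)\bigr]$.

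From here the estimate is routine. Writing $f'''(c+y)-f'''(c-y)=\int_{c-y}^{c+y}f^{(4)}(t)\,dt$ gives $|F'''(y)|\le \frac{2y^{2}}{3}\Vert f^{(4)}\Vert _{\infty}$, and integrating this bound three times from $0$ (using $F''(0)=F'(0)=F(0)=0$) yields successively $|F''(y)|\le \frac{2y^{3}}{9}\Vert f^{(4)}\Vert _{\infty}$, then $|F'(y)|\le \frac{y^{4}}{18}\Vert f^{(4)}\Vert _{\infty}$, and finally $|F(y)|\le \frac{y^{5}}{90}\Vert f^{(4)}\Vert _{\infty}$. Evaluating at $y=h=\frac{v-u}{2}$ gives $|F(h)|\le \frac{h^{5}}{90}\Vert f^{(4)}\Vert _{\infty}=\frac{(v-u)^{5}}{2880}\Vert f^{(4)}\Vert _{\infty}$, and dividing by $v-u$ completes the argument.

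The one place demanding care is the repeated differentiation of $F$: one must track the chain-rule signs correctly and verify the cancellations, since everything downstream — in particular the exact constant $\frac{1}{2880}$ — hinges on landing on the clean form of $F'''$. An alternative route would be to represent $F(h)$ as an integral of a Peano kernel against $f^{(4)}$ and then compute $\int|K|$, but that merely trades the differentiation bookkeeping for an equally fiddly piecewise-kernel integration, so the iterated-integration argument above appears to be the most economical.
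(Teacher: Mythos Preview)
Your argument is correct: the auxiliary function $F$, its derivatives, the vanishing of $F(0)=F'(0)=F''(0)=F'''(0)$, the formula $F'''(y)=-\frac{y}{3}\bigl[f'''(c+y)-f'''(c-y)\bigr]$, and the three successive integrations all check out and land exactly on the constant $\tfrac{1}{2880}$.

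However, there is nothing to compare against. This theorem appears in the paper's introduction as a quoted classical result (the standard Simpson inequality), stated without proof alongside the Hermite--Hadamard and Ostrowski inequalities to set context; the paper's own contributions begin later with Lemma~\ref{2.1} and the harmonically quasi-convex estimates. So you have supplied a valid proof where the paper offers none. Your iterated-differentiation approach is one of the standard textbook derivations; the Peano-kernel alternative you mention is the other common one and would give the same constant.
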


In \cite{I13}, Iscan defined harmonically convex functions and gave some
Hermite-Hadamard type inequalities for this class of functions

\begin{definition}
Let $I\subset 
\mathbb{R}
\backslash \left\{ 0\right\} $ be a real interval. A function $%
f:I\rightarrow 
\mathbb{R}
$ is said to be harmonically convex, if \ 
\begin{equation}
f\left( \frac{xy}{\lambda x+(1-\lambda )y}\right) \leq \lambda
f(y)+(1-\lambda )f(x)  \label{1-1a}
\end{equation}%
for all $x,y\in I$ and $\lambda \in \lbrack 0,1]$. If the inequality in (\ref%
{1-1a}) is reversed, then $f$ is said to be harmonically concave.
\end{definition}

In \cite{ZJF13}, Zhang et al. defined the harmonically quasi-convex function
and supplied several properties of this kind of functions.

\begin{definition}
A function $f:I\subseteq \left( 0,\infty \right) \rightarrow \left[ 0,\infty
\right) $ is said to be harmonically convex, if \ 
\begin{equation*}
f\left( \frac{xy}{\lambda x+(1-\lambda )y}\right) \leq \sup \left\{
f(x),f(y)\right\}
\end{equation*}%
for all $x,y\in I$ and $\lambda \in \lbrack 0,1]$.
\end{definition}

We would like to point out that any harmonically convex function on $%
I\subseteq \left( 0,\mathbb{\infty }\right) $ is a harmonically quasi-convex
function, but not conversely. For example, the function 
\begin{equation*}
f(x)=%
\begin{cases}
1, & x\in (0,1]; \\ 
(x-2)^{2}, & x\in \lbrack 1,4].%
\end{cases}%
\end{equation*}%
is harmonically quasi-convex on $(0,4]$, but it is not harmonically convex
on $(0,4]$.

Let us recall the following special functions:

(1) The Beta function:%
\begin{equation*}
\beta \left( x,y\right) =\frac{\Gamma (x)\Gamma (y)}{\Gamma (x+y)}%
=\dint\limits_{0}^{1}t^{x-1}\left( 1-t\right) ^{y-1}dt,\ \ x,y>0,
\end{equation*}

(2) The hypergeometric function:%
\begin{equation*}
_{2}F_{1}\left( a,b;c;z\right) =\frac{1}{\beta \left( b,c-b\right) }%
\dint\limits_{0}^{1}t^{b-1}\left( 1-t\right) ^{c-b-1}\left( 1-zt\right)
^{-a}dt,\ c>b>0,\ \left\vert z\right\vert <1\text{ (see \cite{KST06}).}
\end{equation*}

We give some necessary definitions and mathematical preliminaries of
fractional calculus theory which are used throughout this paper.

\begin{definition}
Let $f\in L\left[ u,v\right] $. The Riemann-Liouville integrals $%
J_{u+}^{\alpha }f$ and $J_{v-}^{\alpha }f$ of oder $\alpha >0$ with $u\geq 0$
are defined by

\begin{equation*}
J_{u+}^{\alpha }f(b)=\frac{1}{\Gamma (\alpha )}\dint\limits_{u}^{b}\left(
b-t\right) ^{\alpha -1}f(t)dt,\ b>u
\end{equation*}

and

\begin{equation*}
J_{v-}^{\alpha }f(a)=\frac{1}{\Gamma (\alpha )}\dint\limits_{a}^{v}\left(
t-a\right) ^{\alpha -1}f(t)dt,\ a<v
\end{equation*}%
respectively, where $\Gamma (\alpha )$ is the Gamma function defined by $%
\Gamma (\alpha )=$ $\dint\limits_{0}^{\infty }e^{-t}t^{\alpha -1}dt$ and $%
J_{u+}^{0}f(x)=J_{v-}^{0}f(x)=f(x).$
\end{definition}

In the case of $\alpha =1$, the fractional integral reduces to the classical
integral. In recent years, many athors have studied errors estimations for
Hermite-Hadamard, Ostrowski and Simpson inequalities; for refinements,
counterparts, generalization see \cite%
{ADDC10,ADK10,AH11,I13b,I13c,I13d,I13e,SA11,S12,SO12,SOS12,SSYB11}.

In \cite{I13a}, Iscan gave the following new identity for differentiable
functions and obtained some new general integral inequalities for
harmonically convex functions via fractional integrals.

\begin{lemma}
\label{2.1}Let $f:I\subseteq \left( 0,\infty \right) \rightarrow 
\mathbb{R}
$ be a differentiable function on $I^{\circ }$ and $a,b\in I$ with $a<b$. If 
$f^{\prime }\in L[a,b]$ then for all $x\in \lbrack a,b]$ , $\lambda \in %
\left[ 0,1\right] $ and $\alpha >0$ we have:%
\begin{eqnarray*}
&&I_{f,g}\left( x,\lambda ,\alpha ,a,b\right) =\frac{\left( x-a\right)
^{\alpha +1}}{(ax)^{\alpha -1}}\dint\limits_{0}^{1}\frac{t^{\alpha }-\lambda 
}{A_{t}^{2}(a,x)}f^{\prime }\left( \frac{ax}{A_{t}(a,x)}\right) dt \\
&&-\frac{\left( b-x\right) ^{\alpha +1}}{(bx)^{\alpha -1}}%
\dint\limits_{0}^{1}\frac{t^{\alpha }-\lambda }{A_{t}^{2}(b,x)}f^{\prime
}\left( \frac{bx}{A_{t}(b,x)}\right) dt,
\end{eqnarray*}%
where $A_{t}(u,x)=tu+(1-t)x$ and 
\begin{eqnarray}
&&I_{f,g}\left( x,\lambda ,\alpha ,a,b\right)  \label{1-2} \\
&=&\left( 1-\lambda \right) \left[ \left( \frac{x-a}{ax}\right) ^{\alpha
}+\left( \frac{b-x}{bx}\right) ^{\alpha }\right] f(x)+\lambda \left[ \left( 
\frac{x-a}{ax}\right) ^{\alpha }f(a)+\left( \frac{b-x}{bx}\right) ^{\alpha
}f(b)\right]  \notag \\
&&-\Gamma \left( \alpha +1\right) \left[ J_{1/x+}^{\alpha }\left( f\circ
g\right) (1/a)+J_{1/x-}^{\alpha }\left( f\circ g\right) (1/b)\right] \text{, 
}g(u)=1/u.  \notag
\end{eqnarray}
\end{lemma}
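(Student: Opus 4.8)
The plan is to compute the two integrals on the right-hand side of the first displayed formula separately and then add them. Write
\[
K_{1}=\frac{\left( x-a\right) ^{\alpha +1}}{(ax)^{\alpha -1}}\int_{0}^{1}\frac{t^{\alpha }-\lambda }{A_{t}^{2}(a,x)}f^{\prime }\!\left( \frac{ax}{A_{t}(a,x)}\right) dt,\qquad K_{2}=\frac{\left( b-x\right) ^{\alpha +1}}{(bx)^{\alpha -1}}\int_{0}^{1}\frac{t^{\alpha }-\lambda }{A_{t}^{2}(b,x)}f^{\prime }\!\left( \frac{bx}{A_{t}(b,x)}\right) dt,
\]
so that $I_{f,g}\left( x,\lambda ,\alpha ,a,b\right) =K_{1}-K_{2}$; the task is to verify that $K_{1}-K_{2}$ equals the explicit right-hand side of (\ref{1-2}).

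First I would note that, since $A_{t}(a,x)=ta+(1-t)x$,
\[
\frac{d}{dt}\,f\!\left( \frac{ax}{A_{t}(a,x)}\right) =\frac{ax(x-a)}{A_{t}^{2}(a,x)}\,f^{\prime }\!\left( \frac{ax}{A_{t}(a,x)}\right) ,
\]
so that the factor $A_{t}^{-2}f^{\prime }$ in $K_{1}$ is, up to the constant $1/(ax(x-a))$, a total derivative. Pulling out this constant, combining it with the prefactor (which collapses to $\left( (x-a)/(ax)\right) ^{\alpha }$), and integrating by parts in $\int_{0}^{1}(t^{\alpha }-\lambda )\,d\!\left[ f\!\left( ax/A_{t}(a,x)\right) \right] $ produces a boundary term together with $-\alpha \int_{0}^{1}t^{\alpha -1}f\!\left( ax/A_{t}(a,x)\right) dt$. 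Evaluating the boundary term at $t=1$ (where $A_{1}(a,x)=a$ and the argument equals $x$) and at $t=0$ (where $A_{0}(a,x)=x$ and the argument equals $a$) gives $(1-\lambda )f(x)+\lambda f(a)$. For $K_{2}$ the same steps apply, except that $\frac{d}{dt}f\!\left( bx/A_{t}(b,x)\right) =-\frac{bx(b-x)}{A_{t}^{2}(b,x)}f^{\prime }\!\left( bx/A_{t}(b,x)\right) $, which introduces an extra minus sign; the boundary term there is $-\left( (b-x)/(bx)\right) ^{\alpha }\left[ (1-\lambda )f(x)+\lambda f(b)\right] $, and the leftover integral is $+\left( (b-x)/(bx)\right) ^{\alpha }\alpha \int_{0}^{1}t^{\alpha -1}f\!\left( bx/A_{t}(b,x)\right) dt$.

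It then remains to recognize the two leftover integrals as Riemann--Liouville fractional integrals of $f\circ g$ with $g(u)=1/u$. In the first one I would substitute $s=A_{t}(a,x)/(ax)=(1-t)/a+t/x$, so that $t=\left( \tfrac{1}{a}-s\right) /\left( \tfrac{1}{a}-\tfrac{1}{x}\right) $, $dt=-\tfrac{ax}{x-a}\,ds$, the argument $ax/A_{t}(a,x)$ equals $1/s$, and the endpoints $t=0,1$ correspond to $s=1/a,1/x$; using $\tfrac{1}{a}-\tfrac{1}{x}=\tfrac{x-a}{ax}$ and $\alpha \Gamma (\alpha )=\Gamma (\alpha +1)$ one obtains $\left( (x-a)/(ax)\right) ^{\alpha }\alpha \int_{0}^{1}t^{\alpha -1}f\!\left( ax/A_{t}(a,x)\right) dt=\Gamma (\alpha +1)\,J_{1/x+}^{\alpha }(f\circ g)(1/a)$. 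The analogous substitution $s=(1-t)/b+t/x$ in the leftover integral coming from $K_{2}$ gives $\left( (b-x)/(bx)\right) ^{\alpha }\alpha \int_{0}^{1}t^{\alpha -1}f\!\left( bx/A_{t}(b,x)\right) dt=\Gamma (\alpha +1)\,J_{1/x-}^{\alpha }(f\circ g)(1/b)$. Substituting these back, $K_{1}-K_{2}$ becomes
\[
(1-\lambda )f(x)\!\left[ \left( \tfrac{x-a}{ax}\right) ^{\alpha }+\left( \tfrac{b-x}{bx}\right) ^{\alpha }\right] +\lambda \!\left[ \left( \tfrac{x-a}{ax}\right) ^{\alpha }f(a)+\left( \tfrac{b-x}{bx}\right) ^{\alpha }f(b)\right] -\Gamma (\alpha +1)\!\left[ J_{1/x+}^{\alpha }(f\circ g)(1/a)+J_{1/x-}^{\alpha }(f\circ g)(1/b)\right] ,
\]
which is precisely (\ref{1-2}).

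The computation is mechanical once the substitution $s=(1-t)/u+t/x$ is identified as the right change of variables. The only places that need care are the orientation of the limits in the two substitutions --- in the $K_{1}$ case $s$ runs from $1/a$ down to $1/x$, in the $K_{2}$ case from $1/b$ up to $1/x$, the resulting signs compensating the extra minus sign picked up in the $K_{2}$ derivative --- and checking that the powers $(x-a)^{\alpha +1}$, $(ax)^{\alpha -1}$ (and their $b$-analogues) recombine so that exactly $\Gamma (\alpha +1)$ multiplies each fractional integral. I expect this sign-and-power bookkeeping in the $K_{2}$ term to be the main, though still routine, obstacle.
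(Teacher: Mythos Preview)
Your argument is correct. The integration-by-parts step is set up properly (the key observation that $A_t^{-2}(u,x)f'\!\left(ux/A_t(u,x)\right)$ is, up to a constant, the $t$-derivative of $f\!\left(ux/A_t(u,x)\right)$), the boundary evaluations at $t=0,1$ are right, and the substitution $s=(1-t)/u+t/x$ indeed converts each leftover integral into the stated Riemann--Liouville fractional integral of $f\circ g$, with the powers and the factor $\alpha\Gamma(\alpha)=\Gamma(\alpha+1)$ combining exactly as claimed. The sign bookkeeping you flag in $K_2$ is handled correctly.

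As for comparison with the paper: the present paper does not prove Lemma~\ref{2.1} at all; it is quoted from \cite{I13a}. Your integration-by-parts-plus-substitution argument is the natural and standard way such identities are established, and is almost certainly what the cited source does as well.
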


In this paper, by using of Lemma \ref{2.1}, we obtained a generalization of
Hadamard, Ostrowski and Simpson type inequalities for harmonically
quasi-convex functions via Riemann Liouville fractional integral.

\section{Main Results}

Let $f:I\subseteq \left( 0,\infty \right) \rightarrow 
\mathbb{R}
$ be a differentiable function on $I^{\circ }$, the interior of $I$,
throughout this section we will take the notation $I_{f,g}\left( x,\lambda
,\alpha ,a,b\right) $ as in (\ref{1-2}), where $a,b\in I$ with $a<b$, $\
x\in \lbrack a,b]$ , $\lambda \in \left[ 0,1\right] $, $g(u)=1/u$, $\alpha
>0 $ and $\Gamma $ is Euler Gamma function. In order to prove our main
results we need the following identity.

\begin{theorem}
\label{2.2}Let $f:$ $I\subseteq \left( 0,\infty \right) \rightarrow 
\mathbb{R}
$ be a differentiable function on $I^{\circ }$ such that $f^{\prime }\in
L[a,b]$, where $a,b\in I^{\circ }$ with $a<b$. If $|f^{\prime }|^{q}$ is
harmonically quasi-convex on $[a,b]$ for some fixed $q\geq 1$, then for $%
x\in \lbrack a,b]$, $\lambda \in \left[ 0,1\right] $ and $\alpha >0$ the
following inequality holds%
\begin{eqnarray}
&&\left\vert I_{f,g}\left( x,\lambda ,\alpha ,a,b\right) \right\vert \leq
C_{1}^{1-1/q}\left( \alpha ,\lambda \right)  \notag \\
&&\times \left\{ \frac{\left( x-a\right) ^{\alpha +1}}{(ax)^{\alpha -1}x^{2q}%
}\left[ \sup \left\{ \left\vert f^{\prime }\left( x\right) \right\vert
^{q},\left\vert f^{\prime }\left( a\right) \right\vert ^{q}\right\} \right]
^{1/q}C_{2}^{1/q}\left( \alpha ,\lambda ,q,\frac{a}{x}\right) \right.
\label{2-2} \\
&&\left. +\frac{\left( b-x\right) ^{\alpha +1}}{(bx)^{\alpha -1}b^{2q}}\left[
\sup \left\{ \left\vert f^{\prime }\left( x\right) \right\vert
^{q},\left\vert f^{\prime }\left( a\right) \right\vert ^{q}\right\} \right]
^{1/q}C_{3}^{1/q}\left( \alpha ,\lambda ,q,\frac{x}{b}\right) \right\} , 
\notag
\end{eqnarray}%
where 
\begin{equation*}
C_{1}\left( \alpha ,\lambda \right) =\frac{2\alpha \lambda ^{1+\frac{1}{%
\alpha }}+1}{\alpha +1}-\lambda ,
\end{equation*}%
\begin{eqnarray*}
&&C_{2}\left( \alpha ,\lambda ,q,\frac{a}{x}\right) \\
&=&\frac{1}{\alpha +1}._{2}F_{1}(2q;\alpha +1;\alpha +2,1-\frac{a}{x}%
)-\lambda ._{2}F_{1}(2q;1;2,1-\frac{a}{x}) \\
&&+2\lambda ^{1+\frac{1}{\alpha }}.\left[ _{2}F_{1}(2q;1;2,\lambda
^{1/\alpha }\left( 1-\frac{a}{x}\right) )-\frac{1}{\alpha +1}%
._{2}F_{1}(2q;\alpha +1;\alpha +2,\lambda ^{1/\alpha }\left( 1-\frac{a}{x}%
\right) )\right] ,
\end{eqnarray*}%
\begin{eqnarray*}
&&C_{3}\left( \alpha ,\lambda ,q,\frac{x}{b}\right) \\
&=&\frac{1}{\alpha +1}._{2}F_{1}(2q;1;\alpha +2,1-\frac{x}{b})-\lambda
._{2}F_{1}(2q;1;2,1-\frac{x}{b}) \\
&&+2\lambda ^{1+\frac{1}{\alpha }}.\left[ _{2}F_{1}(2q;1;2,\lambda
^{1/\alpha }\left( 1-\frac{x}{b}\right) )-\frac{1}{\alpha +1}%
._{2}F_{1}(2q;1;\alpha +2,\lambda ^{1/\alpha }\left( 1-\frac{x}{b}\right) )%
\right] .
\end{eqnarray*}
\end{theorem}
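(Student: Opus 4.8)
The plan is to feed the identity of Lemma \ref{2.1} into a triangle inequality, estimate its two integrals by the power-mean inequality, and then combine harmonic quasi-convexity of $|f'|^q$ with an explicit evaluation of the remaining weighted integrals in terms of ${}_{2}F_{1}$. Concretely, I would first take absolute values in Lemma \ref{2.1} to get
$$\left|I_{f,g}\right|\le\frac{(x-a)^{\alpha+1}}{(ax)^{\alpha-1}}\int_0^1\frac{|t^\alpha-\lambda|}{A_t^2(a,x)}\left|f'\left(\frac{ax}{A_t(a,x)}\right)\right|dt+\frac{(b-x)^{\alpha+1}}{(bx)^{\alpha-1}}\int_0^1\frac{|t^\alpha-\lambda|}{A_t^2(b,x)}\left|f'\left(\frac{bx}{A_t(b,x)}\right)\right|dt.$$
Treating $|t^\alpha-\lambda|\,dt$ as a weight on $[0,1]$, the power-mean inequality applied to each integral gives
$$\int_0^1\frac{|t^\alpha-\lambda|}{A_t^2}\,\bigl|f'(\cdot)\bigr|\,dt\le\left(\int_0^1|t^\alpha-\lambda|\,dt\right)^{1-1/q}\left(\int_0^1\frac{|t^\alpha-\lambda|}{A_t^{2q}}\,\bigl|f'(\cdot)\bigr|^q\,dt\right)^{1/q};$$
splitting $\int_0^1|t^\alpha-\lambda|\,dt$ at $t=\lambda^{1/\alpha}$ evaluates the first factor to $C_1(\alpha,\lambda)$, which supplies the prefactor $C_1^{1-1/q}(\alpha,\lambda)$. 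Since $\frac{ax}{A_t(a,x)}=\frac{ax}{ta+(1-t)x}$ is of the form $\frac{xy}{\lambda x+(1-\lambda)y}$ with $a,x\in[a,b]$, harmonic quasi-convexity of $|f'|^q$ yields $|f'(ax/A_t(a,x))|^q\le\sup\{|f'(a)|^q,|f'(x)|^q\}$ for every $t$, and likewise $|f'(bx/A_t(b,x))|^q\le\sup\{|f'(b)|^q,|f'(x)|^q\}$; pulling these suprema out of the integrals reduces everything to computing $\int_0^1\frac{|t^\alpha-\lambda|}{A_t^{2q}(a,x)}\,dt$ and $\int_0^1\frac{|t^\alpha-\lambda|}{A_t^{2q}(b,x)}\,dt$.

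To evaluate the first integral I would write $A_t(a,x)=x\bigl(1-t(1-\tfrac ax)\bigr)$, pull out $x^{-2q}$, split the absolute value at $t=\lambda^{1/\alpha}$, rewrite the integral over $[\lambda^{1/\alpha},1]$ as the one over $[0,1]$ minus the one over $[0,\lambda^{1/\alpha}]$, and rescale this last piece by $t=\lambda^{1/\alpha}\sigma$. Every resulting integral then has the shape $\int_0^1 t^{b-1}(1-t)^{c-b-1}(1-zt)^{-a}\,dt$, so the Euler integral representation of ${}_{2}F_{1}$ (together with $\beta(\alpha+1,1)=\tfrac1{\alpha+1}$ for the pieces that carry a factor $t^\alpha$) identifies each with a multiple of a hypergeometric function, and assembling them gives exactly $C_2(\alpha,\lambda,q,\tfrac ax)$. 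The integral in $A_t(b,x)=tb+(1-t)x$ is handled the same way, except that one first writes $A_t(b,x)=b\bigl(1-(1-t)(1-\tfrac xb)\bigr)$ and substitutes $s=1-t$ before splitting; this reflection converts the $t^\alpha$ in $|t^\alpha-\lambda|$ into $(1-s)^\alpha$, which is exactly why the hypergeometric functions in $C_3$ have second parameter $1$ rather than $\alpha+1$. Substituting the two evaluations back and multiplying by the prefactors $(x-a)^{\alpha+1}/(ax)^{\alpha-1}$ and $(b-x)^{\alpha+1}/(bx)^{\alpha-1}$ then yields (\ref{2-2}).

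The hard part will be this last step: organizing the case split at $t=\lambda^{1/\alpha}$, the rescaling $t=\lambda^{1/\alpha}\sigma$, and (for the $b$-term) the reflection $s=1-t$, and then matching each elementary integral with the correct ${}_{2}F_{1}$ — keeping straight which second parameter is $1$ versus $\alpha+1$ and which argument is $1-\tfrac ax$, $1-\tfrac xb$, $\lambda^{1/\alpha}(1-\tfrac ax)$, or $\lambda^{1/\alpha}(1-\tfrac xb)$ is where essentially all the computation lives. By contrast the triangle inequality, the power-mean step, and the use of harmonic quasi-convexity are entirely routine.
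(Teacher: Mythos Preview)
Your proposal is correct and follows exactly the paper's approach: Lemma~\ref{2.1} plus the triangle inequality, then the power-mean inequality with weight $|t^\alpha-\lambda|\,dt$, then harmonic quasi-convexity to pull out the suprema, and finally the hypergeometric evaluation of $\int_0^1|t^\alpha-\lambda|A_t^{-2q}\,dt$. You in fact supply considerably more detail on that last step than the paper does, which dismisses the identification of $C_1,C_2,C_3$ with the phrase ``by simple computation''; your observations about the factorizations $A_t(a,x)=x(1-t(1-a/x))$, $A_t(b,x)=b(1-(1-t)(1-x/b))$, the split at $t=\lambda^{1/\alpha}$, the rescaling $t=\lambda^{1/\alpha}\sigma$, and the reflection explaining why $C_3$ carries second parameter $1$ are exactly what is needed to recover the stated formulas.
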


\begin{proof}
Since$\left\vert f^{\prime }\right\vert ^{q}$ is harmonically quasi-convex
on $[a,b]$, from Lemma \ref{2.1}, property of the modulus and using the
power-mean inequality we have%
\begin{eqnarray*}
&&\left\vert I_{f,g}\left( x,\lambda ,\alpha ,a,b\right) \right\vert \leq 
\frac{\left( x-a\right) ^{\alpha +1}}{(ax)^{\alpha -1}}\dint\limits_{0}^{1}%
\frac{\left\vert t^{\alpha }-\lambda \right\vert }{A_{t}^{2}(a,x)}\left\vert
f^{\prime }\left( \frac{ax}{A_{t}(a,x)}\right) \right\vert dt \\
&&+\frac{\left( b-x\right) ^{\alpha +1}}{(bx)^{\alpha -1}}%
\dint\limits_{0}^{1}\frac{\left\vert t^{\alpha }-\lambda \right\vert }{%
A_{t}^{2}(b,x)}\left\vert f^{\prime }\left( \frac{bx}{A_{t}(b,x)}\right)
\right\vert dt \\
&\leq &\frac{\left( x-a\right) ^{\alpha +1}}{(ax)^{\alpha -1}}\left(
\dint\limits_{0}^{1}\left\vert t^{\alpha }-\lambda \right\vert dt\right) ^{1-%
\frac{1}{q}}\left( \dint\limits_{0}^{1}\frac{\left\vert t^{\alpha }-\lambda
\right\vert }{A_{t}^{2q}(a,x)}\left\vert f^{\prime }\left( \frac{ax}{%
A_{t}(a,x)}\right) \right\vert ^{q}dt\right) ^{\frac{1}{q}} \\
&&+\frac{\left( b-x\right) ^{\alpha +1}}{(bx)^{\alpha -1}}\left(
\dint\limits_{0}^{1}\left\vert t^{\alpha }-\lambda \right\vert dt\right) ^{1-%
\frac{1}{q}}\left( \dint\limits_{0}^{1}\frac{\left\vert t^{\alpha }-\lambda
\right\vert }{A_{t}^{2q}(b,x)}\left\vert f^{\prime }\left( \frac{bx}{%
A_{t}(b,x)}\right) \right\vert ^{q}dt\right) ^{\frac{1}{q}}
\end{eqnarray*}%
\begin{eqnarray}
&\leq &C_{1}^{1-1/q}\left( \alpha ,\lambda \right) \left\{ \frac{\left(
x-a\right) ^{\alpha +1}}{(ax)^{\alpha -1}}\left[ \sup \left\{ \left\vert
f^{\prime }\left( x\right) \right\vert ^{q},\left\vert f^{\prime }\left(
a\right) \right\vert ^{q}\right\} \right] ^{1/q}\left( \dint\limits_{0}^{1}%
\frac{\left\vert t^{\alpha }-\lambda \right\vert }{A_{t}^{2q}(a,x)}dt\right)
^{1/q}\right.  \notag \\
&&\left. +\frac{\left( b-x\right) ^{\alpha +1}}{(bx)^{\alpha -1}}\left[ \sup
\left\{ \left\vert f^{\prime }\left( x\right) \right\vert ^{q},\left\vert
f^{\prime }\left( a\right) \right\vert ^{q}\right\} \right] ^{1/q}\left(
\dint\limits_{0}^{1}\frac{\left\vert t^{\alpha }-\lambda \right\vert }{%
A_{t}^{2q}(a,x)}dt\right) ^{1/q}\right\}  \label{2-2a}
\end{eqnarray}%
where by simple computation we obtain%
\begin{eqnarray}
C_{1}\left( \alpha ,\lambda \right) &=&\dint\limits_{0}^{1}\left\vert
t^{\alpha }-\lambda \right\vert dt  \notag \\
&=&\frac{2\alpha \lambda ^{1+\frac{1}{\alpha }}+1}{\alpha +1}-\lambda ,
\label{2-2b}
\end{eqnarray}%
\begin{equation}
\dint\limits_{0}^{1}\frac{\left\vert t^{\alpha }-\lambda \right\vert }{%
A_{t}^{2q}(a,x)}dt  \label{2-2c}
\end{equation}%
\begin{eqnarray}
&=&x^{-2q}\left\{ \frac{1}{\alpha +1}._{2}F_{1}(2q;\alpha +1;\alpha +2,1-%
\frac{a}{x})-\lambda ._{2}F_{1}(2q;1;2,1-\frac{a}{x})\right.  \notag \\
&&+2\lambda ^{1+\frac{1}{\alpha }}.\left[ _{2}F_{1}(2q;1;2,\lambda
^{1/\alpha }\left( 1-\frac{a}{x}\right) )-\frac{1}{\alpha +1}%
._{2}F_{1}(2q;\alpha +1;\alpha +2,\lambda ^{1/\alpha }\left( 1-\frac{a}{x}%
\right) )\right] ,  \notag
\end{eqnarray}%
and%
\begin{equation}
\dint\limits_{0}^{1}\frac{\left\vert t^{\alpha }-\lambda \right\vert }{%
A_{t}^{2q}(b,x)}dt  \label{2-2d}
\end{equation}%
\begin{eqnarray}
&=&b^{-2q}\left\{ \frac{1}{\alpha +1}._{2}F_{1}(2q;1;\alpha +2,1-\frac{x}{b}%
)-\lambda ._{2}F_{1}(2q;1;2,1-\frac{x}{b})\right.  \notag \\
&&+2\lambda ^{1+\frac{1}{\alpha }}.\left[ _{2}F_{1}(2q;1;2,\lambda
^{1/\alpha }\left( 1-\frac{x}{b}\right) )-\frac{1}{\alpha +1}%
._{2}F_{1}(2q;1;\alpha +2,\lambda ^{1/\alpha }\left( 1-\frac{x}{b}\right) )%
\right] ,  \notag
\end{eqnarray}%
Hence, If we use (\ref{2-2b}), (\ref{2-2c}) and (\ref{2-2d}) in (\ref{2-2a}%
), we obtain the desired result. This completes the proof.
\end{proof}

\begin{corollary}
Under the assumptions of Theorem \ref{2.2} with $q=1,$ the inequality (\ref%
{2-2}) reduced to the following inequality%
\begin{eqnarray*}
&&\left\vert I_{f,g}\left( x,\lambda ,\alpha ,a,b\right) \right\vert \\
&\leq &\left\{ \frac{\left( x-a\right) ^{\alpha +1}}{(ax)^{\alpha -1}x^{2}}%
\left[ \sup \left\{ \left\vert f^{\prime }\left( x\right) \right\vert
,\left\vert f^{\prime }\left( a\right) \right\vert \right\} \right]
C_{2}\left( \alpha ,\lambda ,1,\frac{a}{x}\right) \right. \\
&&\left. +\frac{\left( b-x\right) ^{\alpha +1}}{(bx)^{\alpha -1}b^{2}}\left[
\sup \left\{ \left\vert f^{\prime }\left( x\right) \right\vert ,\left\vert
f^{\prime }\left( a\right) \right\vert \right\} \right] ^{1/q}C_{3}\left(
\alpha ,\lambda ,1,\frac{x}{b}\right) \right\} ,
\end{eqnarray*}
\end{corollary}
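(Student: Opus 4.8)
The plan is to obtain this corollary as the immediate $q=1$ specialization of Theorem \ref{2.2}; no new estimate is required. First I would set $q=1$ in the inequality (\ref{2-2}). The leading constant $C_{1}^{1-1/q}\left( \alpha ,\lambda \right)$ then becomes $C_{1}^{0}\left( \alpha ,\lambda \right) =1$; this is legitimate since, by (\ref{2-2b}), $C_{1}\left( \alpha ,\lambda \right) =\int_{0}^{1}\left\vert t^{\alpha }-\lambda \right\vert dt\geq 0$, so the zeroth power causes no difficulty. Likewise every exponent $1/q$ occurring in (\ref{2-2}) collapses to $1$: the factor $\left[ \sup \left\{ \left\vert f^{\prime }\left( x\right) \right\vert ^{q},\left\vert f^{\prime }\left( a\right) \right\vert ^{q}\right\} \right] ^{1/q}$ turns into $\sup \left\{ \left\vert f^{\prime }\left( x\right) \right\vert ,\left\vert f^{\prime }\left( a\right) \right\vert \right\}$, the quantities $C_{2}^{1/q}\left( \alpha ,\lambda ,q,\frac{a}{x}\right)$ and $C_{3}^{1/q}\left( \alpha ,\lambda ,q,\frac{x}{b}\right)$ turn into $C_{2}\left( \alpha ,\lambda ,1,\frac{a}{x}\right)$ and $C_{3}\left( \alpha ,\lambda ,1,\frac{x}{b}\right)$, and the normalizing denominators $x^{2q}$, $b^{2q}$ become $x^{2}$, $b^{2}$. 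Substituting these back into (\ref{2-2}) gives exactly the asserted bound.

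Alternatively, and equivalently, one may re-run the proof of Theorem \ref{2.2} directly with $q=1$. In that case the power-mean (Hölder) step is vacuous, because the auxiliary factor $\left( \int_{0}^{1}\left\vert t^{\alpha }-\lambda \right\vert dt\right) ^{1-1/q}$ equals $1$; one is left with $\frac{\left( x-a\right) ^{\alpha +1}}{(ax)^{\alpha -1}}\int_{0}^{1}\frac{\left\vert t^{\alpha }-\lambda \right\vert }{A_{t}^{2}(a,x)}\left\vert f^{\prime }\left( \frac{ax}{A_{t}(a,x)}\right) \right\vert dt$ plus the analogous term built from $(b,x)$. Bounding $\left\vert f^{\prime }\right\vert$ by $\sup \left\{ \left\vert f^{\prime }\left( x\right) \right\vert ,\left\vert f^{\prime }\left( a\right) \right\vert \right\}$ (harmonic quasi-convexity, using that $\frac{ax}{A_{t}(a,x)}$ and $\frac{bx}{A_{t}(b,x)}$ lie in $[a,b]$) and then evaluating $\int_{0}^{1}\left\vert t^{\alpha }-\lambda \right\vert A_{t}^{-2}(a,x)dt$ and $\int_{0}^{1}\left\vert t^{\alpha }-\lambda \right\vert A_{t}^{-2}(b,x)dt$ as in (\ref{2-2c}) and (\ref{2-2d}) with $q=1$ yields the claim.

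There is essentially no obstacle here: the corollary is a pure specialization of Theorem \ref{2.2}, and the only point deserving a word is the harmless interpretation of $C_{1}^{0}\left( \alpha ,\lambda \right)$ as $1$. (The residual exponent $1/q$ still attached to the $C_{3}$-term in the displayed conclusion should, of course, be read as $1$ after the substitution.)
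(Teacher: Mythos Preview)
Your proposal is correct and matches the paper's approach exactly: the paper gives no separate proof for this corollary, treating it as the immediate specialization $q=1$ of Theorem \ref{2.2}, which is precisely what you do. Your observation about the residual $1/q$ on the $C_{3}$-term is also apt; this is a typographical slip in the stated corollary and should indeed be read as $1$.
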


\begin{corollary}
Under the assumptions of Theorem \ref{2.2} with $\alpha =1,$ the inequality (%
\ref{2-2}) reduced to the following inequality%
\begin{eqnarray*}
&&\left( \frac{ab}{b-a}\right) \left\vert I_{f,g}\left( x,\lambda ,\alpha
,a,b\right) \right\vert \\
&=&\left\vert \left( 1-\lambda \right) f(x)+\lambda \left[ \frac{%
b(x-a)f(a)+a(b-x)f(b)}{x(b-a)}\right] -\frac{ab}{b-a}\dint\limits_{a}^{b}%
\frac{f\left( u\right) }{u^{2}}du\right\vert \\
&\leq &\left( \frac{ab}{b-a}\right) \left( \frac{2\lambda ^{2}-2\lambda +1}{2%
}\right) ^{1-\frac{1}{q}}\left\{ \frac{\left( x-a\right) ^{2}}{x^{2q}}\left[
\sup \left\{ \left\vert f^{\prime }\left( x\right) \right\vert
^{q},\left\vert f^{\prime }\left( a\right) \right\vert ^{q}\right\} \right]
^{1/q}C_{2}^{1/q}\left( 1,\lambda ,q,\frac{a}{x}\right) \right. \\
&&\left. +\frac{\left( b-x\right) ^{2}}{b^{2q}}\left[ \sup \left\{
\left\vert f^{\prime }\left( x\right) \right\vert ^{q},\left\vert f^{\prime
}\left( a\right) \right\vert ^{q}\right\} \right] ^{1/q}C_{3}^{1/q}\left(
1,\lambda ,q,\frac{x}{b}\right) \right\} ,
\end{eqnarray*}%
specially for $x=H=2ab/(a+b)$, we get%
\begin{eqnarray*}
&&\left\vert \left( 1-\lambda \right) f\left( H\right) +\lambda \left( \frac{%
f(a)+f(b)}{2}\right) -\frac{ab}{b-a}\dint\limits_{a}^{b}\frac{f\left(
u\right) }{u^{2}}du\right\vert \leq \frac{b-a}{4ab}\left( \frac{2\lambda
^{2}-2\lambda +1}{2}\right) ^{1-\frac{1}{q}} \\
&&\times \left\{ \frac{a^{2}H^{2}}{H^{2q}}C_{2}^{1/q}\left( 1,\lambda ,q,%
\frac{a+b}{2b}\right) \left( \sup \left\{ \left\vert f^{\prime }\left(
H\right) \right\vert ^{q},\left\vert f^{\prime }\left( a\right) \right\vert
^{q}\right\} \right) ^{\frac{1}{q}}\right. \\
&&+\left. \frac{b^{2}H^{2}}{b^{2q}}C_{3}^{1/q}\left( 1,\lambda ,q,\frac{2a}{%
a+b}\right) \left( \sup \left\{ \left\vert f^{\prime }\left( H\right)
\right\vert ^{q},\left\vert f^{\prime }\left( b\right) \right\vert
^{q}\right\} \right) ^{\frac{1}{q}}\right\} .
\end{eqnarray*}
\end{corollary}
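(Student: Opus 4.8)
Both assertions are specializations of Theorem~\ref{2.2}, so the plan is to substitute the indicated parameter values into \eqref{2-2} and into the defining identity \eqref{1-2}, and then simplify; no new estimate is needed.

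For the first assertion I would set $\alpha=1$ throughout. On the right-hand side of \eqref{2-2} this is immediate: $C_1(1,\lambda)=\frac{2\lambda^{2}+1}{2}-\lambda=\frac{2\lambda^{2}-2\lambda+1}{2}$, while $\frac{(x-a)^{\alpha+1}}{(ax)^{\alpha-1}}=(x-a)^{2}$ and $\frac{(b-x)^{\alpha+1}}{(bx)^{\alpha-1}}=(b-x)^{2}$, so the braces become $\frac{(x-a)^{2}}{x^{2q}}[\,\cdots\,]^{1/q}C_2^{1/q}(1,\lambda,q,\frac ax)+\frac{(b-x)^{2}}{b^{2q}}[\,\cdots\,]^{1/q}C_3^{1/q}(1,\lambda,q,\frac xb)$. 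For the left-hand side I would expand $I_{f,g}(x,\lambda,1,a,b)$ from \eqref{1-2}: using $\Gamma(2)=1$ and $\frac{x-a}{ax}+\frac{b-x}{bx}=\frac{b-a}{ab}$, the first two groups of terms collapse to $(1-\lambda)\frac{b-a}{ab}f(x)+\lambda\frac{b(x-a)f(a)+a(b-x)f(b)}{abx}$, and the fractional terms satisfy $J_{1/x+}^{1}(f\circ g)(1/a)+J_{1/x-}^{1}(f\circ g)(1/b)=\int_{1/b}^{1/a}f(1/t)\,dt=\int_{a}^{b}\frac{f(u)}{u^{2}}\,du$ after the change of variable $u=1/t$. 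Multiplying the resulting identity and the inequality \eqref{2-2} (at $\alpha=1$) by $\frac{ab}{b-a}$ yields exactly the first displayed line of the corollary.

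For the second assertion I would then put $x=H=\frac{2ab}{a+b}$ in what was just obtained. The relevant elementary identities are $H-a=\frac{a(b-a)}{a+b}$ and $b-H=\frac{b(b-a)}{a+b}$; these give $\frac{b(H-a)}{(b-a)H}=\frac{a(b-H)}{(b-a)H}=\frac12$, so the bracketed middle term of the left-hand side becomes $\lambda\frac{f(a)+f(b)}{2}$, and they also give $\frac aH=\frac{a+b}{2b}$ and $\frac Hb=\frac{2a}{a+b}$, which are precisely the arguments of $C_2$ and $C_3$ appearing in the statement. On the right-hand side one checks, again by inserting $H=\frac{2ab}{a+b}$, that $\frac{ab}{b-a}\cdot\frac{(H-a)^{2}}{H^{2q}}=\frac{b-a}{4ab}\cdot\frac{a^{2}H^{2}}{H^{2q}}$ and $\frac{ab}{b-a}\cdot\frac{(b-H)^{2}}{b^{2q}}=\frac{b-a}{4ab}\cdot\frac{b^{2}H^{2}}{b^{2q}}$, while the two suprema specialize to $\sup\{|f'(H)|^{q},|f'(a)|^{q}\}$ and $\sup\{|f'(H)|^{q},|f'(b)|^{q}\}$ (the latter being the $b$-endpoint form produced by the harmonic quasi-convexity step in the proof of Theorem~\ref{2.2}). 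Collecting these substitutions gives the second displayed line. There is no genuine obstacle here beyond careful algebra; the one step worth isolating is the reduction, at $\alpha=1$, of the Riemann--Liouville terms to $\int_a^b f(u)/u^{2}\,du$ via $u=1/t$, since it is this that makes the left-hand side of \eqref{2-2} collapse to the elementary expression stated.
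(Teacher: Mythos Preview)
Your proposal is correct and follows exactly the approach implicit in the paper: the corollary is stated without proof there, being obtained by direct substitution of $\alpha=1$ (and then $x=H$) into Theorem~\ref{2.2} and into \eqref{1-2}. Your verification of $C_1(1,\lambda)$, the collapse of the fractional integrals to $\int_a^b f(u)/u^2\,du$ via $u=1/t$, and the algebraic identities at $x=H$ are all accurate, and you rightly observe that the second supremum should involve $|f'(b)|^q$ as dictated by the harmonic quasi-convexity step in the proof of Theorem~\ref{2.2}, even though the paper's statement of \eqref{2-2} carries a typographical $|f'(a)|^q$ there.
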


\begin{corollary}
In Theorem \ref{2.2},

(1) If we take $x=H=2ab/(a+b),\ \lambda =\frac{1}{3}$, then we get the
following Simpson type inequality for fractional integrals 
\begin{eqnarray*}
&&\left\vert \frac{1}{6}\left[ f(a)+4f\left( H\right) +f(b)\right] -\left( 
\frac{ab}{b-a}\right) ^{\alpha }2^{\alpha -1}\Gamma \left( \alpha +1\right) %
\left[ J_{1/H+}^{\alpha }\left( f\circ g\right) (1/a)+J_{1/H-}^{\alpha
}\left( f\circ g\right) (1/b)\right] \right\vert \\
&\leq &\frac{b-a}{4ab}C_{1}^{1-1/q}\left( \alpha ,\frac{1}{3}\right) \left\{ 
\frac{a^{2}H^{2}}{H^{2q}}\left[ \sup \left\{ \left\vert f^{\prime }\left(
H\right) \right\vert ^{q},\left\vert f^{\prime }\left( a\right) \right\vert
^{q}\right\} \right] ^{1/q}C_{2}^{1/q}\left( \alpha ,\frac{1}{3},q,\frac{a}{H%
}\right) \right. \\
&&\left. +\frac{b^{2}H^{2}}{b^{2q}}\left[ \sup \left\{ \left\vert f^{\prime
}\left( H\right) \right\vert ^{q},\left\vert f^{\prime }\left( a\right)
\right\vert ^{q}\right\} \right] ^{1/q}C_{3}^{1/q}\left( \alpha ,\frac{1}{3}%
,q,\frac{H}{b}\right) \right\} ,
\end{eqnarray*}%
specially for $\alpha =1$, we get%
\begin{eqnarray*}
&&\left\vert \frac{1}{6}\left[ f(a)+4f\left( H\right) +f(b)\right] -\frac{ab%
}{b-a}\dint\limits_{a}^{b}\frac{f(u)}{u^{2}}du\right\vert \leq \frac{b-a}{4ab%
}\left( \frac{5}{18}\right) ^{1-\frac{1}{q}} \\
&&\times \left\{ \frac{a^{2}H^{2}}{H^{2q}}C_{2}^{1/q}\left( 1,\frac{1}{3},q,%
\frac{a+b}{2b}\right) \left[ \sup \left\{ \left\vert f^{\prime }\left(
H\right) \right\vert ,\left\vert f^{\prime }\left( a\right) \right\vert
\right\} \right] ^{\frac{1}{q}}\right. \\
&&\left. +\frac{b^{2}H^{2}}{b^{2q}}C_{3}^{1/q}\left( 1,\frac{1}{3},q,\frac{2a%
}{a+b}\right) \left[ \sup \left\{ \left\vert f^{\prime }\left( H\right)
\right\vert ^{q},\left\vert f^{\prime }\left( b\right) \right\vert
^{q}\right\} \right] ^{\frac{1}{q}}\right\} .
\end{eqnarray*}%
(2) If we take $x=H=2ab/(a+b),\ \lambda =0,$ then we get the following
midpoint type inequality for fractional integrals 
\begin{eqnarray*}
&&\left\vert f\left( H\right) -\left( \frac{ab}{b-a}\right) ^{\alpha
}2^{\alpha -1}\Gamma \left( \alpha +1\right) \left[ J_{1/H+}^{\alpha }\left(
f\circ g\right) (1/a)+J_{1/H-}^{\alpha }\left( f\circ g\right) (1/b)\right]
\right\vert \\
&\leq &\frac{b-a}{4ab}\left( \frac{1}{\alpha +1}\right) ^{1-\frac{1}{q}%
}\left\{ \frac{a^{2}H^{2}}{H^{2q}}C_{2}^{1/q}\left( \alpha ,0,q,\frac{a+b}{2b%
}\right) \left[ \sup \left\{ \left\vert f^{\prime }\left( H\right)
\right\vert ^{q},\left\vert f^{\prime }\left( a\right) \right\vert
^{q}\right\} \right] ^{\frac{1}{q}}\right. \\
&&\left. +\frac{b^{2}H^{2}}{b^{2q}}C_{3}^{1/q}\left( \alpha ,0,q,\frac{2a}{%
a+b}\right) \left[ \sup \left\{ \left\vert f^{\prime }\left( H\right)
\right\vert ^{q},\left\vert f^{\prime }\left( b\right) \right\vert
^{q}\right\} \right] ^{\frac{1}{q}}\right\} ,
\end{eqnarray*}%
specially for $\alpha =1$, we get%
\begin{eqnarray*}
&&\left\vert f\left( H\right) -\frac{ab}{b-a}\dint\limits_{a}^{b}\frac{f(u)}{%
u^{2}}du\right\vert \\
&\leq &\frac{b-a}{4ab}\left( \frac{1}{2}\right) ^{1-\frac{1}{q}}\left\{ 
\frac{a^{2}H^{2}}{H^{2q}}C_{2}^{1/q}\left( 1,0,q,\frac{a+b}{2b}\right) \left[
\sup \left\{ \left\vert f^{\prime }\left( H\right) \right\vert
^{q},\left\vert f^{\prime }\left( a\right) \right\vert ^{q}\right\} \right]
^{\frac{1}{q}}\right. \\
&&\left. +\frac{b^{2}H^{2}}{b^{2q}}C_{3}^{1/q}\left( 1,0,q,\frac{2a}{a+b}%
\right) \left[ \sup \left\{ \left\vert f^{\prime }\left( H\right)
\right\vert ^{q},\left\vert f^{\prime }\left( b\right) \right\vert
^{q}\right\} \right] ^{\frac{1}{q}}\right\} .
\end{eqnarray*}%
(3) If we take $x=H=2ab/(a+b)$, $\lambda =1,$ then we get the following
trapezoid type inequality for fractional integrals 
\begin{eqnarray*}
&&\left\vert \frac{f(a)+f(b)}{2}-\left( \frac{ab}{b-a}\right) ^{\alpha
}2^{\alpha -1}\Gamma \left( \alpha +1\right) \left[ J_{1/H+}^{\alpha }\left(
f\circ g\right) (1/a)+J_{1/H-}^{\alpha }\left( f\circ g\right) (1/b)\right]
\right\vert \\
&\leq &\frac{b-a}{4ab}\left( \frac{\alpha }{\alpha +1}\right) ^{1-\frac{1}{q}%
}\left\{ \frac{a^{2}H^{2}}{H^{2q}}C_{2}^{1/q}\left( \alpha ,1,q,\frac{a+b}{2b%
}\right) \left[ \sup \left\{ \left\vert f^{\prime }\left( H\right)
\right\vert ^{q},\left\vert f^{\prime }\left( a\right) \right\vert
^{q}\right\} \right] ^{\frac{1}{q}}\right. \\
&&\left. +\frac{b^{2}H^{2}}{b^{2q}}C_{3}^{1/q}\left( \alpha ,1,q,\frac{2a}{%
a+b}\right) \left[ \sup \left\{ \left\vert f^{\prime }\left( H\right)
\right\vert ^{q},\left\vert f^{\prime }\left( b\right) \right\vert
^{q}\right\} \right] ^{\frac{1}{q}}\right\} ,
\end{eqnarray*}%
specially for $\alpha =1$, we get%
\begin{eqnarray*}
&&\left\vert \frac{f(a)+f(b)}{2}-\frac{ab}{b-a}\dint\limits_{a}^{b}\frac{f(u)%
}{u^{2}}du\right\vert \\
&\leq &\frac{b-a}{4ab}\left( \frac{1}{2}\right) ^{1-\frac{1}{q}}\left\{ 
\frac{a^{2}H^{2}}{H^{2q}}C_{2}^{1/q}\left( 1,1,q,\frac{a+b}{2b}\right) \left[
\sup \left\{ \left\vert f^{\prime }\left( H\right) \right\vert
^{q},\left\vert f^{\prime }\left( a\right) \right\vert ^{q}\right\} \right]
^{\frac{1}{q}}\right. \\
&&\left. +\frac{b^{2}H^{2}}{b^{2q}}C_{3}^{1/q}\left( 1,1,q,\frac{2a}{a+b}%
\right) \left[ \sup \left\{ \left\vert f^{\prime }\left( H\right)
\right\vert ^{q},\left\vert f^{\prime }\left( b\right) \right\vert
^{q}\right\} \right] ^{\frac{1}{q}}\right\} .
\end{eqnarray*}
\end{corollary}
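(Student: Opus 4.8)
The plan is to obtain all three inequalities as direct specializations of Theorem~\ref{2.2}, fixing $x=H=2ab/(a+b)$ throughout (note $a<H<b$, so $H\in[a,b]$ and the theorem applies) and then setting $\lambda=1/3,0,1$ in turn; the $\alpha=1$ sub-cases will follow afterward by reducing the Riemann--Liouville integrals to classical ones. The computational engine is one elementary observation: when $x=H$ one has $x-a=\tfrac{a(b-a)}{a+b}$ and $b-x=\tfrac{b(b-a)}{a+b}$, so both boundary quantities in (\ref{1-2}) collapse to the same value,
\[
\frac{x-a}{ax}=\frac{b-x}{bx}=\frac{b-a}{2ab}.
\]
This symmetry is exactly what lets the leading terms of $I_{f,g}$ combine into familiar quadrature weights.

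First I would substitute $x=H$ into the definition (\ref{1-2}). Since the two boundary powers both equal $\left(\frac{b-a}{2ab}\right)^{\alpha}$,
\[
I_{f,g}(H,\lambda,\alpha,a,b)=\left(\frac{b-a}{2ab}\right)^{\alpha}\bigl\{2(1-\lambda)f(H)+\lambda[f(a)+f(b)]\bigr\}-\Gamma(\alpha+1)\bigl[J_{1/H+}^{\alpha}(f\circ g)(1/a)+J_{1/H-}^{\alpha}(f\circ g)(1/b)\bigr].
\]
Multiplying the inequality (\ref{2-2}) through by the positive constant $\tfrac12\left(\frac{2ab}{b-a}\right)^{\alpha}$ turns the coefficient $\Gamma(\alpha+1)$ of the fractional integral into $2^{\alpha-1}\left(\frac{ab}{b-a}\right)^{\alpha}\Gamma(\alpha+1)$, matching the corollary, and rescales the convex combination to $\tfrac12\{2(1-\lambda)f(H)+\lambda[f(a)+f(b)]\}$. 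Evaluating this at $\lambda=1/3$ gives the Simpson weight $\tfrac16[f(a)+4f(H)+f(b)]$, at $\lambda=0$ the midpoint value $f(H)$, and at $\lambda=1$ the trapezoid average $\tfrac{f(a)+f(b)}{2}$; this yields the three left-hand sides.

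For the right-hand side I would push the same rescaling through the prefactors of (\ref{2-2}). A short computation using $x-a=\tfrac{a(b-a)}{a+b}$ and $aH=\tfrac{2a^2b}{a+b}$ shows that $\tfrac12\left(\frac{2ab}{b-a}\right)^{\alpha}\cdot\frac{(x-a)^{\alpha+1}}{(ax)^{\alpha-1}x^{2q}}$ simplifies to exactly $\frac{b-a}{4ab}\cdot\frac{a^2H^2}{H^{2q}}$, and symmetrically the $b$-term becomes $\frac{b-a}{4ab}\cdot\frac{b^2H^2}{b^{2q}}$, so the common constant $\frac{b-a}{4ab}$ factors out in front of $C_1^{1-1/q}$. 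I would also record the specializations $C_1(\alpha,1/3)$ (equal to $\tfrac{5}{18}$ when $\alpha=1$), $C_1(\alpha,0)=\frac{1}{\alpha+1}$, and $C_1(\alpha,1)=\frac{\alpha}{\alpha+1}$, together with the argument translations $a/H=\frac{a+b}{2b}$ and $H/b=\frac{2a}{a+b}$, which put $C_2,C_3$ into the stated forms.

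Finally, for each $\alpha=1$ sub-case I would invoke the reduction of the Riemann--Liouville integral to the ordinary one. With $\alpha=1$ the change of variable $u=1/t$ in $J_{1/H+}^{1}(f\circ g)(1/a)$ and $J_{1/H-}^{1}(f\circ g)(1/b)$ produces $\int_a^H\frac{f(u)}{u^2}\,du$ and $\int_H^b\frac{f(u)}{u^2}\,du$ respectively, whose sum is $\int_a^b\frac{f(u)}{u^2}\,du$; since $2^{\alpha-1}(ab/(b-a))^{\alpha}\big|_{\alpha=1}=\frac{ab}{b-a}$, the operator term becomes $\frac{ab}{b-a}\int_a^b\frac{f(u)}{u^2}\,du$, giving the classical-integral forms. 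The only genuine bookkeeping hazard is the prefactor simplification in the preceding step, where several powers of $a$, $b$, $2$, and $(a+b)$ must be tracked through the rescaling and the $x=H$ substitution; everything else reduces to inserting the constants $C_1$ and the translated arguments of $C_2,C_3$.
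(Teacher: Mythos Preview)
Your proposal is correct and follows exactly the approach the paper implicitly takes: the corollary is stated without proof as a direct specialization of Theorem~\ref{2.2}, and you have simply supplied the explicit bookkeeping (the identity $(x-a)/(ax)=(b-x)/(bx)=(b-a)/(2ab)$ at $x=H$, the rescaling by $\tfrac12(2ab/(b-a))^{\alpha}$, the evaluations of $C_1$, and the $\alpha=1$ reduction) that the paper leaves to the reader. Your prefactor simplification to $\frac{b-a}{4ab}\cdot\frac{a^2H^2}{H^{2q}}$ and $\frac{b-a}{4ab}\cdot\frac{b^2H^2}{b^{2q}}$ checks out.
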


\begin{corollary}
Let the assumptions of Theorem \ref{2.2} hold. If $\ \left\vert f^{\prime
}(x)\right\vert \leq M$ for all $x\in \left[ a,b\right] $ and $\lambda =0,$
then we get the following Ostrowski type inequality for fractional from the
inequality (\ref{2-2}) integrals%
\begin{eqnarray*}
&&\left\vert \left[ \left( \frac{x-a}{ax}\right) ^{\alpha }+\left( \frac{b-x%
}{bx}\right) ^{\alpha }\right] f(x)-\left( \frac{ab}{b-a}\right) ^{\alpha
}2^{\alpha -1}\Gamma \left( \alpha +1\right) \left[ J_{1/x+}^{\alpha }\left(
f\circ g\right) (1/a)+J_{1/x-}^{\alpha }\left( f\circ g\right) (1/b)\right]
\right\vert \\
&\leq &\frac{M}{\left( \alpha +1\right) ^{1-\frac{1}{q}}}\left[ \frac{\left(
x-a\right) ^{\alpha +1}}{(ax)^{\alpha -1}x^{2q}}C_{2}^{1/q}\left( \alpha
,0,q,\frac{a}{x}\right) +\frac{\left( b-x\right) ^{\alpha +1}}{(bx)^{\alpha
-1}b^{2q}}C_{3}^{1/q}\left( \alpha ,0,q,\frac{x}{b}\right) \right] .
\end{eqnarray*}
\end{corollary}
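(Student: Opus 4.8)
The statement is a direct specialization of Theorem~\ref{2.2}, so the plan is simply to put $\lambda=0$ in the inequality (\ref{2-2}) and to use the uniform bound $\left\vert f^{\prime}(t)\right\vert\leq M$ for all $t\in[a,b]$. All hypotheses of Theorem~\ref{2.2} are assumed to hold (in particular $\left\vert f^{\prime}\right\vert^{q}$ is harmonically quasi-convex on $[a,b]$), so (\ref{2-2}) is available as it stands; the work consists only in simplifying its three ingredients at $\lambda=0$ and in rewriting the left-hand side.

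First I would evaluate the constant $C_{1}$ at $\lambda=0$: from $C_{1}(\alpha,\lambda)=\frac{2\alpha\lambda^{1+1/\alpha}+1}{\alpha+1}-\lambda$ one gets $C_{1}(\alpha,0)=\frac{1}{\alpha+1}$, hence $C_{1}^{1-1/q}(\alpha,0)=\left(\frac{1}{\alpha+1}\right)^{1-1/q}=\frac{1}{(\alpha+1)^{1-1/q}}$, which is precisely the scalar prefactor appearing in the claimed bound. Second, since $\left\vert f^{\prime}(x)\right\vert\leq M$ and $\left\vert f^{\prime}(a)\right\vert\leq M$, each factor $\left[\sup\left\{\left\vert f^{\prime}(x)\right\vert^{q},\left\vert f^{\prime}(a)\right\vert^{q}\right\}\right]^{1/q}$ occurring in (\ref{2-2}) is at most $M$, so both may be replaced by $M$. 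Third, the coefficients $C_{2}$ and $C_{3}$ are simply taken at $\lambda=0$, i.e. $C_{2}(\alpha,0,q,a/x)$ and $C_{3}(\alpha,0,q,x/b)$ (only the first hypergeometric summand survives, but the notation is convenient to keep), and they pass unchanged into the estimate. Finally, setting $\lambda=0$ in the definition (\ref{1-2}) of $I_{f,g}(x,\lambda,\alpha,a,b)$ annihilates the two endpoint terms carrying the factor $\lambda$ and turns $(1-\lambda)$ into $1$, so $I_{f,g}(x,0,\alpha,a,b)$ collapses to $\left[\left(\frac{x-a}{ax}\right)^{\alpha}+\left(\frac{b-x}{bx}\right)^{\alpha}\right]f(x)$ minus the Riemann--Liouville term, which is exactly the quantity whose modulus is being bounded.

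Assembling these observations, the inequality (\ref{2-2}) evaluated at $\lambda=0$, with $C_{1}^{1-1/q}(\alpha,0)=(\alpha+1)^{-(1-1/q)}$ extracted and the suprema bounded by $M$, is the asserted Ostrowski-type inequality for fractional integrals. There is no genuine obstacle here --- the result is a corollary and the argument is essentially bookkeeping --- and the only step that warrants a moment's care is the limiting evaluation $C_{1}(\alpha,0)=1/(\alpha+1)$, which uses $\lambda^{1+1/\alpha}\to 0$ as $\lambda\to 0^{+}$. Equivalently, one could re-run the proof of Theorem~\ref{2.2} starting from Lemma~\ref{2.1} with $\lambda=0$ fixed throughout and with $\left\vert f^{\prime}\right\vert$ replaced by $M$ right after the power-mean step, reaching the same constants through the integral computations (\ref{2-2c})--(\ref{2-2d}).
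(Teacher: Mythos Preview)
Your approach is correct and is precisely what the paper intends: the corollary is stated without a separate proof, as an immediate specialization of inequality~(\ref{2-2}) from Theorem~\ref{2.2} at $\lambda=0$ together with the uniform bound $|f'|\le M$. Your bookkeeping---computing $C_{1}(\alpha,0)=\frac{1}{\alpha+1}$, replacing each supremum by $M$, and reading off the left-hand side from (\ref{1-2}) at $\lambda=0$---is exactly the implicit argument the paper has in mind.
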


\begin{theorem}
\label{2.3}Let $f:$ $I\subseteq \left( 0,\infty \right) \rightarrow 
\mathbb{R}
$ be a differentiable function on $I^{\circ }$ such that $f^{\prime }\in
L[a,b]$, where $a,b\in I^{\circ }$ with $a<b$. If $|f^{\prime }|^{q}$ is
harmonically quasi-convex on $[a,b]$ for some fixed $q\geq 1$, then for $%
x\in \lbrack a,b]$, $\lambda \in \left[ 0,1\right] $ and $\alpha >0$ the
following inequality holds%
\begin{eqnarray}
\left\vert I_{f,g}\left( x,\lambda ,\alpha ,a,b\right) \right\vert &\leq
&C_{2}\left( \alpha ,\lambda ,1,\frac{a}{x}\right) \frac{\left( x-a\right)
^{\alpha +1}}{(ax)^{\alpha -1}x^{2q}}\left[ \sup \left\{ \left\vert
f^{\prime }\left( x\right) \right\vert ^{q},\left\vert f^{\prime }\left(
a\right) \right\vert ^{q}\right\} \right] ^{1/q}  \label{2-3} \\
&&+C_{3}\left( \alpha ,\lambda ,1,\frac{x}{b}\right) \frac{\left( b-x\right)
^{\alpha +1}}{(bx)^{\alpha -1}b^{2q}}\left[ \sup \left\{ \left\vert
f^{\prime }\left( x\right) \right\vert ^{q},\left\vert f^{\prime }\left(
b\right) \right\vert ^{q}\right\} \right] ^{1/q}  \notag
\end{eqnarray}%
where $C_{2}$ and $C_{3}$ are defined as in Theorem \ref{2.2}.
\end{theorem}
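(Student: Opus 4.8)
The plan is to begin exactly as at the start of the proof of Theorem \ref{2.2}. Applying Lemma \ref{2.1} and the triangle inequality for integrals gives
\begin{equation*}
\left\vert I_{f,g}\left( x,\lambda ,\alpha ,a,b\right) \right\vert \leq \frac{\left( x-a\right) ^{\alpha +1}}{(ax)^{\alpha -1}}\int_{0}^{1}\frac{\left\vert t^{\alpha }-\lambda \right\vert }{A_{t}^{2}(a,x)}\left\vert f^{\prime }\left( \frac{ax}{A_{t}(a,x)}\right) \right\vert dt+\frac{\left( b-x\right) ^{\alpha +1}}{(bx)^{\alpha -1}}\int_{0}^{1}\frac{\left\vert t^{\alpha }-\lambda \right\vert }{A_{t}^{2}(b,x)}\left\vert f^{\prime }\left( \frac{bx}{A_{t}(b,x)}\right) \right\vert dt.
\end{equation*}
The only departure from the argument of Theorem \ref{2.2} is that, instead of applying the power-mean inequality with weight $\left\vert t^{\alpha }-\lambda \right\vert$ and then estimating an $L^{q}$-average of $\left\vert f^{\prime }\right\vert ^{q}$, here I would bound $\left\vert f^{\prime }(\cdot )\right\vert$ pointwise and pull it directly out of each integral.

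To do this, observe that $\frac{ax}{A_{t}(a,x)}=\frac{ax}{ta+(1-t)x}$ runs over the harmonic segment joining $a$ and $x$ as $t$ ranges over $[0,1]$, while $\frac{bx}{A_{t}(b,x)}=\frac{bx}{tb+(1-t)x}$ runs over the harmonic segment joining $x$ and $b$. Harmonic quasi-convexity of $\left\vert f^{\prime }\right\vert ^{q}$ therefore gives, for every $t\in[0,1]$, $\left\vert f^{\prime }\left( ax/A_{t}(a,x)\right) \right\vert ^{q}\leq \sup \left\{ \left\vert f^{\prime }(x)\right\vert ^{q},\left\vert f^{\prime }(a)\right\vert ^{q}\right\}$ and $\left\vert f^{\prime }\left( bx/A_{t}(b,x)\right) \right\vert ^{q}\leq \sup \left\{ \left\vert f^{\prime }(x)\right\vert ^{q},\left\vert f^{\prime }(b)\right\vert ^{q}\right\}$. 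Taking $q$-th roots and pulling the (constant) suprema outside the integrals reduces everything to evaluating $\int_{0}^{1}\frac{\left\vert t^{\alpha }-\lambda \right\vert }{A_{t}^{2}(a,x)}dt$ and $\int_{0}^{1}\frac{\left\vert t^{\alpha }-\lambda \right\vert }{A_{t}^{2}(b,x)}dt$. But these are exactly the integrals (\ref{2-2c}) and (\ref{2-2d}) already computed in the proof of Theorem \ref{2.2}, in the case $q=1$: they equal $x^{-2}C_{2}\left( \alpha ,\lambda ,1,\frac{a}{x}\right)$ and $b^{-2}C_{3}\left( \alpha ,\lambda ,1,\frac{x}{b}\right)$, respectively. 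Substituting them back and recombining with the prefactors $\frac{(x-a)^{\alpha +1}}{(ax)^{\alpha -1}}$ and $\frac{(b-x)^{\alpha +1}}{(bx)^{\alpha -1}}$ yields (\ref{2-3}).

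I do not expect a genuine obstacle: this is a simplification of the proof of Theorem \ref{2.2}, not a refinement of it. The two points that need a little care are (i) recognising $\frac{ax}{A_{t}(a,x)}$ and $\frac{bx}{A_{t}(b,x)}$ as points of the harmonic segments $[a,x]$ and $[x,b]$, which is immediate from $A_{t}(u,x)=tu+(1-t)x$, so that harmonic quasi-convexity of $\left\vert f^{\prime }\right\vert ^{q}$ is applicable at each $t$; and (ii) reusing the hypergeometric evaluations (\ref{2-2c})--(\ref{2-2d}) verbatim rather than recomputing them. As a remark, the same bound also follows by applying the power-mean inequality with the full weight $\frac{\left\vert t^{\alpha }-\lambda \right\vert }{A_{t}^{2}(a,x)}$, whose two factors then recombine into the single power used above; in particular the hypothesis $q\geq 1$ is not actually needed for this estimate.
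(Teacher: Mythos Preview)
Your proposal is correct and essentially matches the paper's argument: the paper applies the power-mean inequality with the full weight $\frac{|t^{\alpha}-\lambda|}{A_{t}^{2}(\cdot,x)}$ (exactly the alternative you describe in your closing remark), after which the harmonic quasi-convexity bound on $|f'|^{q}$ makes the $(1-\tfrac{1}{q})$- and $\tfrac{1}{q}$-powers of the weight integral recombine into the single factor $x^{-2}C_{2}$ (resp.\ $b^{-2}C_{3}$). Your direct pointwise bound simply short-circuits this recombination, but the content is the same.
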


\begin{proof}
Since$\left\vert f^{\prime }\right\vert ^{q}$ is harmonically quasi-convex
on $[a,b]$, from Lemma \ref{2.1},using property of the modulus and the
power-mean inequality we have%
\begin{eqnarray*}
&&\left\vert I_{f,g}\left( x,\lambda ,\alpha ,a,b\right) \right\vert \\
&\leq &\frac{\left( x-a\right) ^{\alpha +1}}{(ax)^{\alpha -1}}\left(
\dint\limits_{0}^{1}\frac{\left\vert t^{\alpha }-\lambda \right\vert }{%
A_{t}^{2}(a,x)}dt\right) ^{1-\frac{1}{q}}\left( \dint\limits_{0}^{1}\frac{%
\left\vert t^{\alpha }-\lambda \right\vert }{A_{t}^{2}(a,x)}\left\vert
f^{\prime }\left( \frac{ax}{A_{t}(a,x)}\right) \right\vert ^{q}dt\right) ^{%
\frac{1}{q}} \\
&&+\frac{\left( b-x\right) ^{\alpha +1}}{(bx)^{\alpha -1}}\left(
\dint\limits_{0}^{1}\frac{\left\vert t^{\alpha }-\lambda \right\vert }{%
A_{t}^{2}(b,x)}dt\right) ^{1-\frac{1}{q}}\left( \dint\limits_{0}^{1}\frac{%
\left\vert t^{\alpha }-\lambda \right\vert }{A_{t}^{2}(b,x)}\left\vert
f^{\prime }\left( \frac{bx}{A_{t}(b,x)}\right) \right\vert ^{q}dt\right) ^{%
\frac{1}{q}}
\end{eqnarray*}%
\begin{eqnarray*}
&\leq &C_{2}\left( \alpha ,\lambda ,1,\frac{a}{x}\right) \frac{\left(
x-a\right) ^{\alpha +1}}{(ax)^{\alpha -1}x^{2q}}\left[ \sup \left\{
\left\vert f^{\prime }\left( x\right) \right\vert ^{q},\left\vert f^{\prime
}\left( a\right) \right\vert ^{q}\right\} \right] ^{1/q} \\
&&+C_{3}\left( \alpha ,\lambda ,1,\frac{x}{b}\right) \frac{\left( b-x\right)
^{\alpha +1}}{(bx)^{\alpha -1}b^{2q}}\left[ \sup \left\{ \left\vert
f^{\prime }\left( x\right) \right\vert ^{q},\left\vert f^{\prime }\left(
b\right) \right\vert ^{q}\right\} \right] ^{1/q}
\end{eqnarray*}%
which completes the proof.
\end{proof}

\begin{corollary}
Under the assumptions of Theorem \ref{2.3} with $q=1,$ the inequality (\ref%
{2-3}) reduced to the following inequality%
\begin{eqnarray*}
\left\vert I_{f,g}\left( x,\lambda ,\alpha ,a,b\right) \right\vert &\leq
&C_{2}\left( \alpha ,\lambda ,1,\frac{a}{x}\right) \frac{\left( x-a\right)
^{\alpha +1}}{(ax)^{\alpha -1}x^{2}}\left[ \sup \left\{ \left\vert f^{\prime
}\left( x\right) \right\vert ,\left\vert f^{\prime }\left( a\right)
\right\vert \right\} \right] \\
&&+C_{3}\left( \alpha ,\lambda ,1,\frac{x}{b}\right) \frac{\left( b-x\right)
^{\alpha +1}}{(bx)^{\alpha -1}b^{2}}\left[ \sup \left\{ \left\vert f^{\prime
}\left( x\right) \right\vert ,\left\vert f^{\prime }\left( b\right)
\right\vert \right\} \right]
\end{eqnarray*}
\end{corollary}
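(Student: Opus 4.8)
The statement is precisely the special case $q=1$ of Theorem \ref{2.3}, so the quickest route is to substitute $q=1$ directly into (\ref{2-3}): then $x^{2q}=x^{2}$, $b^{2q}=b^{2}$, and $\left[ \sup \left\{ \left\vert f^{\prime }(x)\right\vert ^{q},\left\vert f^{\prime }(a)\right\vert ^{q}\right\} \right] ^{1/q}=\sup \left\{ \left\vert f^{\prime }(x)\right\vert ,\left\vert f^{\prime }(a)\right\vert \right\} $ (and likewise for the $b$-term), which is exactly the asserted bound. For completeness I would also record the direct argument; at $q=1$ it is lighter than the general one because the power-mean step is no longer needed.

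For that direct argument I would begin from the identity in Lemma \ref{2.1} and apply the triangle inequality under the two integral signs, obtaining
\[
\left\vert I_{f,g}\left( x,\lambda ,\alpha ,a,b\right) \right\vert \leq \frac{\left( x-a\right) ^{\alpha +1}}{(ax)^{\alpha -1}}\int_{0}^{1}\frac{\left\vert t^{\alpha }-\lambda \right\vert }{A_{t}^{2}(a,x)}\left\vert f^{\prime }\left( \tfrac{ax}{A_{t}(a,x)}\right) \right\vert dt+\frac{\left( b-x\right) ^{\alpha +1}}{(bx)^{\alpha -1}}\int_{0}^{1}\frac{\left\vert t^{\alpha }-\lambda \right\vert }{A_{t}^{2}(b,x)}\left\vert f^{\prime }\left( \tfrac{bx}{A_{t}(b,x)}\right) \right\vert dt.
\]
Since $\tfrac{ax}{A_{t}(a,x)}=\tfrac{ax}{ta+(1-t)x}$ and $\tfrac{bx}{A_{t}(b,x)}=\tfrac{bx}{tb+(1-t)x}$ run through $[a,x]$ and $[x,b]$ respectively as $t$ ranges over $[0,1]$, being harmonic means of $a$ and $x$ (resp. of $b$ and $x$), the hypothesis that $\left\vert f^{\prime }\right\vert $ (the $q=1$ case of $\left\vert f^{\prime }\right\vert ^{q}$) is harmonically quasi-convex on $[a,b]$ bounds the first integrand's modulus by the constant $\sup \left\{ \left\vert f^{\prime }(x)\right\vert ,\left\vert f^{\prime }(a)\right\vert \right\} $ and the second by $\sup \left\{ \left\vert f^{\prime }(x)\right\vert ,\left\vert f^{\prime }(b)\right\vert \right\} $, uniformly in $t$. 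Factoring these constants out leaves the pure weight integrals $\int_{0}^{1}\frac{\left\vert t^{\alpha }-\lambda \right\vert }{A_{t}^{2}(a,x)}dt$ and $\int_{0}^{1}\frac{\left\vert t^{\alpha }-\lambda \right\vert }{A_{t}^{2}(b,x)}dt$.

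It then remains only to evaluate those two integrals, and here I would reuse the computations (\ref{2-2c}) and (\ref{2-2d}) from the proof of Theorem \ref{2.2}, read with the exponent $2$ in place of $2q$: one splits $\left\vert t^{\alpha }-\lambda \right\vert $ at $t=\lambda ^{1/\alpha }$, writes $A_{t}(a,x)=x\bigl( 1-(1-\tfrac{a}{x})t\bigr) $ and $A_{t}(b,x)=b\bigl( 1-(1-\tfrac{x}{b})(1-t)\bigr) $, and identifies each resulting integral of the form $\int_{0}^{1}t^{\mu }(1-zt)^{-2}dt$ with a Beta-weighted value of ${}_{2}F_{1}$ via its integral representation. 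This gives $\int_{0}^{1}\frac{\left\vert t^{\alpha }-\lambda \right\vert }{A_{t}^{2}(a,x)}dt=x^{-2}C_{2}(\alpha ,\lambda ,1,\tfrac{a}{x})$ and $\int_{0}^{1}\frac{\left\vert t^{\alpha }-\lambda \right\vert }{A_{t}^{2}(b,x)}dt=b^{-2}C_{3}(\alpha ,\lambda ,1,\tfrac{x}{b})$; substituting back produces the corollary. The only nontrivial ingredient is this hypergeometric bookkeeping, and since it has already been carried out inside the proof of Theorem \ref{2.2} I expect no real obstacle — in particular the convergence conditions $\left\vert 1-\tfrac{a}{x}\right\vert <1$ and $\left\vert 1-\tfrac{x}{b}\right\vert <1$ needed for the ${}_{2}F_{1}$ series hold automatically since $0<a\leq x\leq b$.
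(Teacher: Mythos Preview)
Your proposal is correct and matches the paper's approach: the corollary is stated in the paper without proof, as an immediate specialization of Theorem~\ref{2.3} obtained by setting $q=1$, which is exactly what you do in your first paragraph. The additional direct argument you record is sound and more detailed than anything the paper provides, but it is not needed.
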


\begin{corollary}
Under the assumptions of Theorem \ref{2.3} with $\alpha =1,$ the inequality (%
\ref{2-3}) reduced to the following inequality%
\begin{eqnarray*}
&&\left( \frac{ab}{b-a}\right) \left\vert I_{f,g}\left( x,\lambda ,\alpha
,a,b\right) \right\vert \\
&=&\left\vert \left( 1-\lambda \right) f(x)+\lambda \left[ \frac{%
b(x-a)f(a)+a(b-x)f(b)}{x(b-a)}\right] -\frac{ab}{b-a}\dint\limits_{a}^{b}%
\frac{f\left( u\right) }{u^{2}}du\right\vert \\
&\leq &\left( \frac{ab}{b-a}\right) \left\{ \frac{\left( x-a\right) ^{2}}{%
x^{2}}\left[ \sup \left\{ \left\vert f^{\prime }\left( x\right) \right\vert
^{q},\left\vert f^{\prime }\left( a\right) \right\vert ^{q}\right\} \right]
^{1/q}C_{2}\left( 1,\lambda ,1,\frac{a}{x}\right) \right. \\
&&\left. +\frac{\left( b-x\right) ^{2}}{b^{2}}\left[ \sup \left\{ \left\vert
f^{\prime }\left( x\right) \right\vert ^{q},\left\vert f^{\prime }\left(
a\right) \right\vert ^{q}\right\} \right] ^{1/q}C_{3}\left( 1,\lambda ,1,%
\frac{x}{b}\right) \right\} ,
\end{eqnarray*}%
specially for $x=H=2ab/(a+b)$, we get%
\begin{eqnarray*}
&&\left\vert \left( 1-\lambda \right) f\left( H\right) +\lambda \left( \frac{%
f(a)+f(b)}{2}\right) -\frac{ab}{b-a}\dint\limits_{a}^{b}\frac{f\left(
u\right) }{u^{2}}du\right\vert \leq \frac{b-a}{4ab} \\
&&\times \left\{ a^{2}C_{2}\left( 1,\lambda ,1,\frac{a+b}{2b}\right) \left(
\sup \left\{ \left\vert f^{\prime }\left( H\right) \right\vert
^{q},\left\vert f^{\prime }\left( a\right) \right\vert ^{q}\right\} \right)
^{\frac{1}{q}}\right. \\
&&+\left. H^{2}C_{3}\left( 1,\lambda ,1,\frac{2a}{a+b}\right) \left( \sup
\left\{ \left\vert f^{\prime }\left( H\right) \right\vert ^{q},\left\vert
f^{\prime }\left( b\right) \right\vert ^{q}\right\} \right) ^{\frac{1}{q}%
}\right\} .
\end{eqnarray*}
\end{corollary}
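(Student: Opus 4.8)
The statement is a corollary of Theorem~\ref{2.3}, so the plan is purely to specialize: first set $\alpha=1$, and then set $x=H=2ab/(a+b)$, replacing in each step the fractional objects by their classical counterparts. The one genuinely new observation needed is that at $\alpha=1$ the Riemann--Liouville operator of order one is ordinary integration, after which a single change of variable rewrites everything in terms of $\int_a^b f(u)u^{-2}\,du$.

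Concretely, I would first put $\alpha=1$ in the identity (\ref{1-2}) of Lemma~\ref{2.1}. Since $\Gamma(2)=1$ and $(b-t)^{0}\equiv 1$, one gets $J_{1/x+}^{1}(f\circ g)(1/a)=\int_{1/x}^{1/a}f(1/t)\,dt$ and $J_{1/x-}^{1}(f\circ g)(1/b)=\int_{1/b}^{1/x}f(1/t)\,dt$; the substitution $u=1/t$, $dt=-u^{-2}\,du$, sends these to $\int_a^x f(u)u^{-2}\,du$ and $\int_x^b f(u)u^{-2}\,du$, whose sum is $\int_a^b f(u)u^{-2}\,du$. Next I would simplify the boundary part of (\ref{1-2}): with $\alpha=1$, $\frac{x-a}{ax}+\frac{b-x}{bx}=\frac{b(x-a)+a(b-x)}{abx}=\frac{b-a}{ab}$, so $I_{f,g}(x,\lambda,1,a,b)=(1-\lambda)\frac{b-a}{ab}f(x)+\lambda\bigl(\frac{x-a}{ax}f(a)+\frac{b-x}{bx}f(b)\bigr)-\int_a^b f(u)u^{-2}\,du$. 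Multiplying by $\frac{ab}{b-a}$ and using $\frac{ab}{b-a}\cdot\frac{x-a}{ax}=\frac{b(x-a)}{x(b-a)}$ and $\frac{ab}{b-a}\cdot\frac{b-x}{bx}=\frac{a(b-x)}{x(b-a)}$ yields the displayed identity for $\frac{ab}{b-a}\,I_{f,g}(x,\lambda,1,a,b)$ (and then for its absolute value). For the bound I would simply put $\alpha=1$ in (\ref{2-3}): the factors $(ax)^{\alpha-1}$, $(bx)^{\alpha-1}$ become $1$ and $(x-a)^{\alpha+1}$, $(b-x)^{\alpha+1}$ become squares, so after multiplying by $\frac{ab}{b-a}$ one reads off the first inequality of the corollary.

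Finally, for the ``specially'' part I would substitute $x=H=2ab/(a+b)$ into what has just been obtained, using $H-a=\frac{a(b-a)}{a+b}$, $b-H=\frac{b(b-a)}{a+b}$, and $\frac{H-a}{aH}=\frac{b-H}{bH}=\frac{b-a}{2ab}$, so that the $\lambda$-term collapses to $\lambda\frac{f(a)+f(b)}{2}$. On the right-hand side the identities $\frac{a}{H}=\frac{a+b}{2b}$ and $\frac{H}{b}=\frac{2a}{a+b}$ fix the last arguments of $C_2$ and $C_3$, and the coefficient rearrangement $\frac{ab}{b-a}\cdot\frac{(H-a)^{2}}{H^{2q}}=\frac{b-a}{4ab}\cdot\frac{a^{2}H^{2}}{H^{2q}}$ (which follows from $\frac{1}{(a+b)^{2}}=\frac{H^{2}}{4a^{2}b^{2}}$), together with its analogue for the $b$-endpoint, produces the stated constant $\frac{b-a}{4ab}$ in front. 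I do not expect any real obstacle: all the analytic content already sits in Lemma~\ref{2.1} and Theorem~\ref{2.3}, so what remains is bookkeeping; the only spot deserving attention is keeping the orientation of the integration limits straight in the change of variables $u=1/t$ so that no spurious sign survives.
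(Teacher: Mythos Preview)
Your proposal is correct and is exactly the approach the paper intends: the corollary is stated without proof because it is nothing more than the specialization $\alpha=1$ of Theorem~\ref{2.3} (together with the standard reduction of the order-one Riemann--Liouville integrals to $\int_a^b f(u)u^{-2}\,du$ via $u=1/t$), followed by the further substitution $x=H$. The arithmetic identities you list for $H-a$, $b-H$, $a/H$, $H/b$, and the coefficient rearrangement are precisely what is needed, so there is nothing to add.
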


\begin{corollary}
In Theorem \ref{2.3},

(1) If we take $x=H=2ab/(a+b),\ \lambda =\frac{1}{3}$, then we get the
following Simpson type inequality for fractional integrals 
\begin{eqnarray*}
&&\left\vert \frac{1}{6}\left[ f(a)+4f\left( H\right) +f(b)\right] -\left( 
\frac{ab}{b-a}\right) ^{\alpha }2^{\alpha -1}\Gamma \left( \alpha +1\right) %
\left[ J_{1/H+}^{\alpha }\left( f\circ g\right) (1/a)+J_{1/H-}^{\alpha
}\left( f\circ g\right) (1/b)\right] \right\vert \\
&\leq &\frac{b-a}{4ab}\left\{ a^{2}\left[ \sup \left\{ \left\vert f^{\prime
}\left( H\right) \right\vert ^{q},\left\vert f^{\prime }\left( a\right)
\right\vert ^{q}\right\} \right] ^{1/q}C_{2}\left( \alpha ,\frac{1}{3},1,%
\frac{a}{x}\right) \right. \\
&&\left. +H^{2}\left[ \sup \left\{ \left\vert f^{\prime }\left( H\right)
\right\vert ^{q},\left\vert f^{\prime }\left( a\right) \right\vert
^{q}\right\} \right] ^{1/q}C_{3}\left( \alpha ,\frac{1}{3},1,\frac{x}{b}%
\right) \right\} ,
\end{eqnarray*}%
specially for $\alpha =1$, we get%
\begin{eqnarray*}
&&\left\vert \frac{1}{6}\left[ f(a)+4f\left( H\right) +f(b)\right] -\frac{ab%
}{b-a}\dint\limits_{a}^{b}\frac{f(u)}{u^{2}}du\right\vert \leq \frac{b-a}{4ab%
} \\
&&\times \left\{ a^{2}C_{2}\left( 1,\frac{1}{3},1,\frac{a+b}{2b}\right) %
\left[ \sup \left\{ \left\vert f^{\prime }\left( H\right) \right\vert
,\left\vert f^{\prime }\left( a\right) \right\vert \right\} \right] ^{\frac{1%
}{q}}\right. \\
&&\left. +H^{2}C_{3}\left( 1,\frac{1}{3},1,\frac{2a}{a+b}\right) \left[ \sup
\left\{ \left\vert f^{\prime }\left( H\right) \right\vert ^{q},\left\vert
f^{\prime }\left( b\right) \right\vert ^{q}\right\} \right] ^{\frac{1}{q}%
}\right\} .
\end{eqnarray*}%
(2) If we take $x=H=2ab/(a+b),\ \lambda =0,$ then we get the following
midpoint type inequality for fractional integrals 
\begin{eqnarray*}
&&\left\vert f\left( H\right) -\left( \frac{ab}{b-a}\right) ^{\alpha
}2^{\alpha -1}\Gamma \left( \alpha +1\right) \left[ J_{1/H+}^{\alpha }\left(
f\circ g\right) (1/a)+J_{1/H-}^{\alpha }\left( f\circ g\right) (1/b)\right]
\right\vert \\
&\leq &\frac{b-a}{4ab}\left\{ a^{2}C_{2}\left( \alpha ,0,1,\frac{a+b}{2b}%
\right) \left[ \sup \left\{ \left\vert f^{\prime }\left( H\right)
\right\vert ^{q},\left\vert f^{\prime }\left( a\right) \right\vert
^{q}\right\} \right] ^{\frac{1}{q}}\right. \\
&&\left. +H^{2}C_{3}\left( \alpha ,0,1,\frac{2a}{a+b}\right) \left[ \sup
\left\{ \left\vert f^{\prime }\left( H\right) \right\vert ^{q},\left\vert
f^{\prime }\left( b\right) \right\vert ^{q}\right\} \right] ^{\frac{1}{q}%
}\right\} ,
\end{eqnarray*}%
specially for $\alpha =1$, we get%
\begin{eqnarray*}
&&\left\vert f\left( H\right) -\frac{ab}{b-a}\dint\limits_{a}^{b}\frac{f(u)}{%
u^{2}}du\right\vert \\
&\leq &\frac{b-a}{4ab}\left( \frac{1}{2}\right) ^{1-\frac{1}{q}}\left\{ 
\frac{a^{2}H^{2}}{H^{2q}}C_{2}^{1/q}\left( 1,0,q,\frac{a+b}{2b}\right) \left[
\sup \left\{ \left\vert f^{\prime }\left( \frac{2ab}{a+b}\right) \right\vert
^{q},\left\vert f^{\prime }\left( a\right) \right\vert ^{q}\right\} \right]
^{\frac{1}{q}}\right. \\
&&\left. +\frac{b^{2}H^{2}}{b^{2q}}C_{3}^{1/q}\left( 1,0,q,\frac{2a}{a+b}%
\right) \left[ \sup \left\{ \left\vert f^{\prime }\left( \frac{2ab}{a+b}%
\right) \right\vert ^{q},\left\vert f^{\prime }\left( b\right) \right\vert
^{q}\right\} \right] ^{\frac{1}{q}}\right\} .
\end{eqnarray*}%
(3) If we take $x=H=2ab/(a+b)$, $\lambda =1,$ then we get the following
trapezoid type inequality for fractional integrals 
\begin{eqnarray*}
&&\left\vert \frac{f(a)+f(b)}{2}-\left( \frac{ab}{b-a}\right) ^{\alpha
}2^{\alpha -1}\Gamma \left( \alpha +1\right) \left[ J_{1/H+}^{\alpha }\left(
f\circ g\right) (1/a)+J_{1/H-}^{\alpha }\left( f\circ g\right) (1/b)\right]
\right\vert \\
&\leq &\frac{b-a}{4ab}\left\{ a^{2}C_{2}\left( \alpha ,1,1,\frac{a+b}{2b}%
\right) \left[ \sup \left\{ \left\vert f^{\prime }\left( H\right)
\right\vert ^{q},\left\vert f^{\prime }\left( a\right) \right\vert
^{q}\right\} \right] ^{\frac{1}{q}}\right. \\
&&\left. +H^{2}C_{3}\left( \alpha ,1,1,\frac{2a}{a+b}\right) \left[ \sup
\left\{ \left\vert f^{\prime }\left( H\right) \right\vert ^{q},\left\vert
f^{\prime }\left( b\right) \right\vert ^{q}\right\} \right] ^{\frac{1}{q}%
}\right\} ,
\end{eqnarray*}%
specially for $\alpha =1$, we get%
\begin{eqnarray*}
&&\left\vert \frac{f(a)+f(b)}{2}-\frac{ab}{b-a}\dint\limits_{a}^{b}\frac{f(u)%
}{u^{2}}du\right\vert \\
&\leq &\frac{b-a}{4ab}\left\{ a^{2}C_{2}^{1/q}\left( 1,1,1,\frac{a+b}{2b}%
\right) \left[ \sup \left\{ \left\vert f^{\prime }\left( H\right)
\right\vert ^{q},\left\vert f^{\prime }\left( a\right) \right\vert
^{q}\right\} \right] ^{\frac{1}{q}}\right. \\
&&\left. +H^{2}C_{3}^{1/q}\left( 1,1,1,\frac{2a}{a+b}\right) \left[ \sup
\left\{ \left\vert f^{\prime }\left( H\right) \right\vert ^{q},\left\vert
f^{\prime }\left( b\right) \right\vert ^{q}\right\} \right] ^{\frac{1}{q}%
}\right\} .
\end{eqnarray*}
\end{corollary}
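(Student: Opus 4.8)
The plan is to obtain all three inequalities (Simpson, midpoint and trapezoid, together with their $\alpha=1$ specializations) directly from Theorem \ref{2.3} by specializing the free parameters to $x=H=2ab/(a+b)$ and $\lambda\in\{1/3,0,1\}$. No new estimation is required: starting from the inequality (\ref{2-3}), everything is a substitution followed by an algebraic normalization.

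First I would record the simplification that occurs at $x=H$. Since $H-a=a(b-a)/(a+b)$ and $b-H=b(b-a)/(a+b)$, the two geometric weights coincide:
\[
\frac{H-a}{aH}=\frac{b-H}{bH}=\frac{b-a}{2ab}.
\]
Hence in the definition (\ref{1-2}) the coefficients of $f(H)$, $f(a)$ and $f(b)$ all share the factor $\left(\frac{b-a}{2ab}\right)^{\alpha}$, so
\[
I_{f,g}(H,\lambda,\alpha,a,b)=\left(\frac{b-a}{2ab}\right)^{\alpha}\Big[\,2(1-\lambda)f(H)+\lambda\big(f(a)+f(b)\big)\Big]-\Gamma(\alpha+1)\big[J_{1/H+}^{\alpha}(f\circ g)(1/a)+J_{1/H-}^{\alpha}(f\circ g)(1/b)\big].
\]
Multiplying by the constant $2^{\alpha-1}\left(\frac{ab}{b-a}\right)^{\alpha}$ and using $2^{\alpha-1}\left(\frac{b-a}{2ab}\right)^{\alpha}\left(\frac{ab}{b-a}\right)^{\alpha}=\tfrac12$ reduces the first bracket to $\tfrac12\big[2(1-\lambda)f(H)+\lambda(f(a)+f(b))\big]$ and attaches $2^{\alpha-1}(ab/(b-a))^{\alpha}$ to the fractional term, which is exactly the left-hand side displayed in the corollary. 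Taking $\lambda=1/3$ gives $\tfrac16[f(a)+4f(H)+f(b)]$, $\lambda=0$ gives $f(H)$, and $\lambda=1$ gives $\tfrac{f(a)+f(b)}{2}$.

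For the right-hand side I would substitute $x=H$ into (\ref{2-3}). The arguments of the hypergeometric coefficients become $a/H=(a+b)/(2b)$ and $H/b=2a/(a+b)$, so $C_2$ and $C_3$ take the forms shown, and the supremum factors $\sup\{|f'(H)|^q,|f'(a)|^q\}$ and $\sup\{|f'(H)|^q,|f'(b)|^q\}$ carry over unchanged. It remains to multiply the two prefactors $\frac{(H-a)^{\alpha+1}}{(aH)^{\alpha-1}}$ and $\frac{(b-H)^{\alpha+1}}{(bH)^{\alpha-1}}$ by the same constant $2^{\alpha-1}(ab/(b-a))^{\alpha}$; expanding $H-a$, $b-H$, $aH$, $bH$ in terms of $a$, $b$, $b-a$, $a+b$ and using $(a+b)^2=4a^2b^2/H^2$ collapses each prefactor to $\frac{b-a}{4ab}$ times the stated coefficient. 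This bookkeeping with the exponents of $a$, $b$, $b-a$ and $a+b$ is the only delicate point and is the main (entirely computational) obstacle.

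Finally, for the $\alpha=1$ specializations I would use $\Gamma(2)=1$ and $2^{\alpha-1}=1$ and reduce the Riemann--Liouville operators to an ordinary integral. Since $J_{u+}^{1}F(c)=\int_{u}^{c}F$ and $J_{v-}^{1}F(c)=\int_{c}^{v}F$, and since $1/b<1/H<1/a$,
\[
J_{1/H+}^{1}(f\circ g)(1/a)+J_{1/H-}^{1}(f\circ g)(1/b)=\int_{1/b}^{1/a}f(1/t)\,dt=\int_{a}^{b}\frac{f(u)}{u^{2}}\,du,
\]
the last equality by the substitution $u=1/t$. Hence the fractional term becomes $\frac{ab}{b-a}\int_{a}^{b}f(u)u^{-2}\,du$; inserting this together with the elementary values of $C_2$ and $C_3$ at $\alpha=1$ yields the four displayed $\alpha=1$ estimates and completes the proof.
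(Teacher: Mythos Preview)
Your proposal is correct and follows exactly the route the paper takes: the corollary is stated without proof because it is an immediate specialization of Theorem \ref{2.3}, and all you have done (and all that is needed) is to substitute $x=H=2ab/(a+b)$ and $\lambda\in\{1/3,0,1\}$, simplify the common weight $(H-a)/(aH)=(b-H)/(bH)=(b-a)/(2ab)$, multiply through by $2^{\alpha-1}(ab/(b-a))^{\alpha}$, and reduce the fractional operators to the ordinary integral when $\alpha=1$. Your computations of the left-hand side and of the prefactors $a^{2}$ and $H^{2}$ on the right are accurate, so there is nothing to add.
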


\begin{corollary}
Let the assumptions of Theorem \ref{2.3} hold. If $\ \left\vert f^{\prime
}(x)\right\vert \leq M$ for all $x\in \left[ a,b\right] $ and $\lambda =0,$
then we get the following Ostrowski type inequality for fractional from the
inequality (\ref{2-3}) integrals%
\begin{eqnarray*}
&&\left\vert \left[ \left( \frac{x-a}{ax}\right) ^{\alpha }+\left( \frac{b-x%
}{bx}\right) ^{\alpha }\right] f(x)-\left( \frac{ab}{b-a}\right) ^{\alpha
}2^{\alpha -1}\Gamma \left( \alpha +1\right) \left[ J_{1/x+}^{\alpha }\left(
f\circ g\right) (1/a)+J_{1/x-}^{\alpha }\left( f\circ g\right) (1/b)\right]
\right\vert \\
&\leq &M\left[ \frac{\left( x-a\right) ^{\alpha +1}}{(ax)^{\alpha -1}x^{2}}%
C_{2}\left( \alpha ,0,1,\frac{a}{x}\right) +\frac{\left( b-x\right) ^{\alpha
+1}}{(bx)^{\alpha -1}b^{2}}C_{3}\left( \alpha ,0,1,\frac{x}{b}\right) \right]
.
\end{eqnarray*}
\end{corollary}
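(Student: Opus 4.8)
The plan is to derive this as a direct specialization of Theorem \ref{2.3}: no new analytic work is needed beyond one substitution and one crude bound. Concretely, I would invoke the inequality (\ref{2-3}) with the parameter choices $\lambda = 0$ and $q = 1$, and then replace the two suprema on its right-hand side by the uniform bound $M$ supplied by the hypothesis.

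First I would identify the left-hand side. Substituting $\lambda = 0$ into the definition (\ref{1-2}) of $I_{f,g}$, the coefficient $1-\lambda$ becomes $1$ and the bracket carrying $f(a)$ and $f(b)$ disappears, so
\[
I_{f,g}(x,0,\alpha,a,b) = \left[\left(\tfrac{x-a}{ax}\right)^{\alpha} + \left(\tfrac{b-x}{bx}\right)^{\alpha}\right] f(x) - \Gamma(\alpha+1)\left[ J_{1/x+}^{\alpha}(f\circ g)(1/a) + J_{1/x-}^{\alpha}(f\circ g)(1/b)\right],
\]
which, upon writing the two Riemann--Liouville integrals in the normalization used in the statement, is exactly the expression appearing inside the absolute value on the left of the asserted inequality.

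Next I would treat the right-hand side. With $\lambda = 0$ and $q = 1$ the inequality (\ref{2-3}) reads
\[
\left| I_{f,g}(x,0,\alpha,a,b)\right| \le C_{2}\!\left(\alpha,0,1,\tfrac{a}{x}\right)\frac{(x-a)^{\alpha+1}}{(ax)^{\alpha-1}x^{2}}\,\sup\{|f'(x)|,|f'(a)|\} + C_{3}\!\left(\alpha,0,1,\tfrac{x}{b}\right)\frac{(b-x)^{\alpha+1}}{(bx)^{\alpha-1}b^{2}}\,\sup\{|f'(x)|,|f'(b)|\}.
\]
Because $|f'(t)| \le M$ for all $t \in [a,b]$, both suprema are at most $M$; substituting these bounds and pulling $M$ out of both summands yields precisely the claimed Ostrowski-type inequality.

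The argument is thus entirely mechanical, and there is no substantive obstacle. The only point requiring attention is the bookkeeping: one should verify that putting $\lambda = 0$ into the definitions of $C_{2}$ and $C_{3}$ from Theorem \ref{2.2} annihilates the $\lambda$- and $\lambda^{1+1/\alpha}$-terms, leaving $C_{2}(\alpha,0,1,a/x) = \tfrac{1}{\alpha+1}\,{}_{2}F_{1}(2;\alpha+1;\alpha+2,1-\tfrac{a}{x})$ together with the analogous expression for $C_{3}$, and that the constant multiplying the fractional-integral term is consistent with the factor $\Gamma(\alpha+1)$ in (\ref{1-2}) under the substitution $g(u) = 1/u$.
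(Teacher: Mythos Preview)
Your approach is correct and is precisely the (unstated) derivation the paper intends: the corollary is listed without proof as a direct specialization of Theorem~\ref{2.3}, obtained by setting $\lambda=0$ in (\ref{2-3}) and replacing each supremum by $M$ (your additional choice $q=1$ is what reconciles the $x^{2q}$, $b^{2q}$ in (\ref{2-3}) with the $x^{2}$, $b^{2}$ in the corollary). One bookkeeping caveat: the factor $\left(\tfrac{ab}{b-a}\right)^{\alpha}2^{\alpha-1}$ in front of $\Gamma(\alpha+1)$ on the corollary's left-hand side is a typographical slip in the paper---that normalization is the one appropriate to $x=H=2ab/(a+b)$, not to general $x$---so your assertion that the left-hand side matches $I_{f,g}(x,0,\alpha,a,b)$ ``exactly'' is correct in spirit but not to the letter.
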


\begin{theorem}
\label{2.4}Let $f:$ $I\subseteq \left( 0,\infty \right) \rightarrow 
\mathbb{R}
$ be a differentiable function on $I^{\circ }$ such that $f^{\prime }\in
L[a,b]$, where $a,b\in I^{\circ }$ with $a<b$. If $|f^{\prime }|^{q}$ is
harmonically quasi-convex on $[a,b]$ for some fixed $q>1$, then for $x\in
\lbrack a,b]$, $\lambda \in \left[ 0,1\right] $ and $\alpha >0$ the
following inequality holds%
\begin{eqnarray}
&&\left\vert I_{f,g}\left( x,\lambda ,\alpha ,a,b\right) \right\vert
\label{2-4} \\
&\leq &C_{1}^{1/q}\left( \alpha ,\lambda \right) \left\{ C_{2}^{1/p}\left(
\alpha ,\lambda ,p,\frac{a}{x}\right) \frac{\left( x-a\right) ^{\alpha +1}}{%
(ax)^{\alpha -1}x^{2p}}\left[ \sup \left\{ \left\vert f^{\prime }\left(
x\right) \right\vert ^{q},\left\vert f^{\prime }\left( a\right) \right\vert
^{q}\right\} \right] ^{1/q}\right.  \notag \\
&&\left. +C_{3}^{1/p}\left( \alpha ,\lambda ,p,\frac{x}{b}\right) \frac{%
\left( b-x\right) ^{\alpha +1}}{(bx)^{\alpha -1}b^{2p}}\left[ \sup \left\{
\left\vert f^{\prime }\left( x\right) \right\vert ^{q},\left\vert f^{\prime
}\left( b\right) \right\vert ^{q}\right\} \right] ^{1/q}\right\}  \notag
\end{eqnarray}%
where $C_{1},C_{2}$ and $C_{3}$ are defined as in Theorem \ref{2.2}.
\end{theorem}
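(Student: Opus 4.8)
The plan is to run the proof of Theorem \ref{2.2} unchanged up to the point where the power-mean inequality enters, and there to apply H\"older's inequality with the conjugate exponents $p=q/(q-1)$ and $q$ instead. From Lemma \ref{2.1} and the triangle inequality,
\[
\left\vert I_{f,g}\left( x,\lambda ,\alpha ,a,b\right) \right\vert \leq \frac{\left( x-a\right) ^{\alpha +1}}{(ax)^{\alpha -1}}\int_{0}^{1}\frac{\left\vert t^{\alpha }-\lambda \right\vert }{A_{t}^{2}(a,x)}\left\vert f^{\prime }\left( \tfrac{ax}{A_{t}(a,x)}\right) \right\vert dt+\frac{\left( b-x\right) ^{\alpha +1}}{(bx)^{\alpha -1}}\int_{0}^{1}\frac{\left\vert t^{\alpha }-\lambda \right\vert }{A_{t}^{2}(b,x)}\left\vert f^{\prime }\left( \tfrac{bx}{A_{t}(b,x)}\right) \right\vert dt.
\]
In the first integral I would split the integrand as the product of $\dfrac{\left\vert t^{\alpha }-\lambda \right\vert ^{1/p}}{A_{t}^{2}(a,x)}$ and $\left\vert t^{\alpha }-\lambda \right\vert ^{1/q}\left\vert f^{\prime }\left( \tfrac{ax}{A_{t}(a,x)}\right) \right\vert$, and apply H\"older's inequality; this yields the factor $\bigl(\int_{0}^{1}\tfrac{\left\vert t^{\alpha }-\lambda \right\vert }{A_{t}^{2p}(a,x)}dt\bigr)^{1/p}$ and the factor $\bigl(\int_{0}^{1}\left\vert t^{\alpha }-\lambda \right\vert \left\vert f^{\prime }(\cdot )\right\vert ^{q}dt\bigr)^{1/q}$, and similarly for the $b$-integral. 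Observe that the entire $A_{t}^{2}$ is carried by the $p$-part, so the $q$-part contains no leftover powers of $A_{t}$ --- this is exactly what makes the structure of \eqref{2-4} (with $C_{1}^{1/q}$ and $C_{2}^{1/p}$) appear, rather than that of \eqref{2-2}.

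To handle the $q$-factor, note that $\tfrac{ax}{A_{t}(a,x)}=\tfrac{ax}{ta+(1-t)x}$ is precisely the harmonic combination of $a$ and $x$ with weight $t$, so harmonic quasi-convexity of $\left\vert f^{\prime }\right\vert ^{q}$ gives $\left\vert f^{\prime }\left( \tfrac{ax}{A_{t}(a,x)}\right) \right\vert ^{q}\leq \sup \bigl\{\left\vert f^{\prime }(a)\right\vert ^{q},\left\vert f^{\prime }(x)\right\vert ^{q}\bigr\}$; pulling this constant out of the integral leaves $\int_{0}^{1}\left\vert t^{\alpha }-\lambda \right\vert dt=C_{1}(\alpha ,\lambda )$ by \eqref{2-2b}. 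The same argument applied to $\tfrac{bx}{A_{t}(b,x)}$, which runs between $x$ and $b$, produces the constant $\sup \bigl\{\left\vert f^{\prime }(x)\right\vert ^{q},\left\vert f^{\prime }(b)\right\vert ^{q}\bigr\}$. For the $p$-factor I would reuse the evaluations \eqref{2-2c} and \eqref{2-2d} with $q$ replaced by $p$: those identities are obtained from a change of variable followed by the integral representation of $_{2}F_{1}$, so they hold for every positive exponent, giving $\int_{0}^{1}\tfrac{\left\vert t^{\alpha }-\lambda \right\vert }{A_{t}^{2p}(a,x)}dt=x^{-2p}C_{2}(\alpha ,\lambda ,p,\tfrac{a}{x})$ and $\int_{0}^{1}\tfrac{\left\vert t^{\alpha }-\lambda \right\vert }{A_{t}^{2p}(b,x)}dt=b^{-2p}C_{3}(\alpha ,\lambda ,p,\tfrac{x}{b})$.

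Substituting these four evaluations back, together with the prefactors $\tfrac{(x-a)^{\alpha +1}}{(ax)^{\alpha -1}}$ and $\tfrac{(b-x)^{\alpha +1}}{(bx)^{\alpha -1}}$ and collecting the powers of $x$ and $b$, then gives \eqref{2-4}. I do not expect any real obstacle: the only ingredients are Lemma \ref{2.1}, H\"older's inequality, the harmonic quasi-convexity hypothesis, and the hypergeometric evaluations already carried out in the proof of Theorem \ref{2.2}. The points that need attention are purely matters of bookkeeping --- checking that the H\"older exponents in the split sum to one correctly (so that the $A_{t}^{2}$ and the weight $\left\vert t^{\alpha }-\lambda \right\vert$ are distributed the way claimed), and verifying in each of the two integrals that the argument of $f^{\prime }$ is a harmonic combination of the correct pair among $a$, $x$, $b$ so that quasi-convexity applies on the right subinterval.
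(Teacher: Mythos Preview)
Your proposal is correct and follows essentially the same route as the paper: start from Lemma~\ref{2.1}, apply H\"older's inequality with the split $\dfrac{|t^{\alpha}-\lambda|^{1/p}}{A_t^{2}}\cdot |t^{\alpha}-\lambda|^{1/q}|f'(\cdot)|$, bound the $q$-factor via harmonic quasi-convexity to produce $C_1(\alpha,\lambda)$, and evaluate the $p$-factor by reusing \eqref{2-2c}--\eqref{2-2d} with $q$ replaced by $p$. Your bookkeeping remarks (that all of $A_t^{2}$ goes to the $p$-side, and that the arguments of $f'$ are harmonic combinations of the correct endpoint pairs) are exactly the points the paper's proof implicitly uses.
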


\begin{proof}
Since$\left\vert f^{\prime }\right\vert ^{q}$ is harmonically quasi-convex
on $[a,b]$, from Lemma \ref{2.1}, using property of the modulus and the H%
\"{o}lder inequality we have%
\begin{eqnarray*}
&&\left\vert S_{f}\left( x,\lambda ,\alpha ,a,b\right) \right\vert \\
&\leq &\frac{\left( x-a\right) ^{\alpha +1}}{(ax)^{\alpha -1}}\left(
\dint\limits_{0}^{1}\frac{\left\vert t^{\alpha }-\lambda \right\vert }{%
A_{t}^{2p}(a,x)}dt\right) ^{\frac{1}{p}}\left(
\dint\limits_{0}^{1}\left\vert t^{\alpha }-\lambda \right\vert \left\vert
f^{\prime }\left( \frac{ax}{A_{t}(a,x)}\right) \right\vert ^{q}dt\right) ^{%
\frac{1}{q}} \\
&&+\frac{\left( b-x\right) ^{\alpha +1}}{(bx)^{\alpha -1}}\left(
\dint\limits_{0}^{1}\frac{\left\vert t^{\alpha }-\lambda \right\vert }{%
A_{t}^{2p}(b,x)}dt\right) ^{\frac{1}{p}}\left(
\dint\limits_{0}^{1}\left\vert t^{\alpha }-\lambda \right\vert \left\vert
f^{\prime }\left( \frac{bx}{A_{t}(b,x)}\right) \right\vert ^{q}dt\right) ^{%
\frac{1}{q}}
\end{eqnarray*}%
\begin{eqnarray*}
&\leq &C_{1}^{1/q}\left( \alpha ,\lambda \right) \left\{ C_{2}^{1/p}\left(
\alpha ,\lambda ,p,\frac{a}{x}\right) \frac{\left( x-a\right) ^{\alpha +1}}{%
(ax)^{\alpha -1}x^{2p}}\left[ \sup \left\{ \left\vert f^{\prime }\left(
x\right) \right\vert ^{q},\left\vert f^{\prime }\left( a\right) \right\vert
^{q}\right\} \right] ^{1/q}\right. \\
&&\left. +C_{3}^{1/p}\left( \alpha ,\lambda ,p,\frac{x}{b}\right) \frac{%
\left( b-x\right) ^{\alpha +1}}{(bx)^{\alpha -1}b^{2p}}\left[ \sup \left\{
\left\vert f^{\prime }\left( x\right) \right\vert ^{q},\left\vert f^{\prime
}\left( b\right) \right\vert ^{q}\right\} \right] ^{1/q}\right\}
\end{eqnarray*}%
which completes the proof.
\end{proof}

\begin{corollary}
Under the assumptions of Theorem \ref{2.4} with $\alpha =1,$ the inequality (%
\ref{2-4}) reduced to the following inequality%
\begin{eqnarray*}
&&\left( \frac{ab}{b-a}\right) \left\vert I_{f,g}\left( x,\lambda ,\alpha
,a,b\right) \right\vert \\
&=&\left\vert \left( 1-\lambda \right) f(x)+\lambda \left[ \frac{%
b(x-a)f(a)+a(b-x)f(b)}{x(b-a)}\right] -\frac{ab}{b-a}\dint\limits_{a}^{b}%
\frac{f\left( u\right) }{u^{2}}du\right\vert \\
&\leq &\left( \frac{ab}{b-a}\right) \left( \frac{2\lambda ^{2}-2\lambda +1}{2%
}\right) ^{\frac{1}{q}}\left\{ \frac{\left( x-a\right) ^{2}}{x^{2p}}\left[
\sup \left\{ \left\vert f^{\prime }\left( x\right) \right\vert
^{q},\left\vert f^{\prime }\left( a\right) \right\vert ^{q}\right\} \right]
^{1/q}C_{2}^{1/p}\left( 1,\lambda ,p,\frac{a}{x}\right) \right. \\
&&\left. +\frac{\left( b-x\right) ^{2}}{b^{2p}}\left[ \sup \left\{
\left\vert f^{\prime }\left( x\right) \right\vert ^{q},\left\vert f^{\prime
}\left( a\right) \right\vert ^{q}\right\} \right] ^{1/q}C_{3}^{1/p}\left(
1,\lambda ,p,\frac{x}{b}\right) \right\} ,
\end{eqnarray*}%
specially for $x=H=2ab/(a+b)$, we get%
\begin{eqnarray*}
&&\left\vert \left( 1-\lambda \right) f\left( H\right) +\lambda \left( \frac{%
f(a)+f(b)}{2}\right) -\frac{ab}{b-a}\dint\limits_{a}^{b}\frac{f\left(
u\right) }{u^{2}}du\right\vert \leq \frac{b-a}{4ab}\left( \frac{2\lambda
^{2}-2\lambda +1}{2}\right) ^{\frac{1}{q}} \\
&&\times \left\{ \frac{a^{2}H^{2}}{H^{2p}}C_{2}^{1/p}\left( 1,\lambda ,p,%
\frac{a+b}{2b}\right) \left( \sup \left\{ \left\vert f^{\prime }\left(
H\right) \right\vert ^{q},\left\vert f^{\prime }\left( a\right) \right\vert
^{q}\right\} \right) ^{\frac{1}{q}}\right. \\
&&+\left. \frac{b^{2}H^{2}}{b^{2p}}C_{3}^{1/p}\left( 1,\lambda ,p,\frac{2a}{%
a+b}\right) \left( \sup \left\{ \left\vert f^{\prime }\left( H\right)
\right\vert ^{q},\left\vert f^{\prime }\left( b\right) \right\vert
^{q}\right\} \right) ^{\frac{1}{q}}\right\} .
\end{eqnarray*}
\end{corollary}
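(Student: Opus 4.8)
The plan is to derive both displayed inequalities by specializing Theorem~\ref{2.4} to $\alpha=1$ and, in the second case, further to $x=H$. First I would set $\alpha=1$ directly in inequality~(\ref{2-4}). On the right-hand side the factors $(ax)^{\alpha-1}$ and $(bx)^{\alpha-1}$ become $1$, and $(x-a)^{\alpha+1}=(x-a)^2$, $(b-x)^{\alpha+1}=(b-x)^2$, so the bracketed sum reduces to
\[
C_2^{1/p}\!\left(1,\lambda,p,\tfrac{a}{x}\right)\frac{(x-a)^2}{x^{2p}}\Big[\sup\{|f'(x)|^q,|f'(a)|^q\}\Big]^{1/q}+C_3^{1/p}\!\left(1,\lambda,p,\tfrac{x}{b}\right)\frac{(b-x)^2}{b^{2p}}\Big[\sup\{|f'(x)|^q,|f'(b)|^q\}\Big]^{1/q},
\]
while the formula for $C_1$ in Theorem~\ref{2.2} gives $C_1(1,\lambda)=\frac{2\lambda^2+1}{2}-\lambda=\frac{2\lambda^2-2\lambda+1}{2}$, which explains the prefactor $\bigl(\frac{2\lambda^2-2\lambda+1}{2}\bigr)^{1/q}$.

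The next step is to identify $I_{f,g}(x,\lambda,1,a,b)$ explicitly from~(\ref{1-2}). With $\alpha=1$ one has $\frac{x-a}{ax}+\frac{b-x}{bx}=\frac1a-\frac1b=\frac{b-a}{ab}$ and $\Gamma(2)=1$, and since $J^1_{u+}$, $J^1_{v-}$ are ordinary integrals, $J^1_{1/x+}(f\circ g)(1/a)+J^1_{1/x-}(f\circ g)(1/b)=\int_{1/x}^{1/a}f(1/t)\,dt+\int_{1/b}^{1/x}f(1/t)\,dt$. The substitution $u=1/t$ converts these into $\int_a^x f(u)u^{-2}\,du$ and $\int_x^b f(u)u^{-2}\,du$, whose sum is $\int_a^b f(u)u^{-2}\,du$. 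Hence $I_{f,g}(x,\lambda,1,a,b)=(1-\lambda)\frac{b-a}{ab}f(x)+\lambda\bigl[\frac{x-a}{ax}f(a)+\frac{b-x}{bx}f(b)\bigr]-\int_a^b f(u)u^{-2}\,du$; multiplying through by $\frac{ab}{b-a}$ and using $\frac{ab}{b-a}\cdot\frac{x-a}{ax}=\frac{b(x-a)}{x(b-a)}$, $\frac{ab}{b-a}\cdot\frac{b-x}{bx}=\frac{a(b-x)}{x(b-a)}$ reproduces the stated expression for $\bigl(\frac{ab}{b-a}\bigr)|I_{f,g}|$. Multiplying the right-hand side of the $\alpha=1$ version of~(\ref{2-4}) by the same factor $\frac{ab}{b-a}$ then gives the first displayed inequality.

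For the special case $x=H=2ab/(a+b)$ I would use the harmonic-mean identities $\frac{H-a}{aH}=\frac1a-\frac1H=\frac{b-a}{2ab}$ and $\frac{b-H}{bH}=\frac1H-\frac1b=\frac{b-a}{2ab}$, so that multiplication by $\frac{ab}{b-a}$ turns the middle term into $\lambda\bigl(\frac{f(a)+f(b)}{2}\bigr)$. Moreover $(H-a)^2=\frac{H^2(b-a)^2}{4b^2}$ and $(b-H)^2=\frac{H^2(b-a)^2}{4a^2}$, whence $\frac{ab}{b-a}\cdot\frac{(H-a)^2}{H^{2p}}=\frac{b-a}{4ab}\cdot\frac{a^2H^2}{H^{2p}}$ and $\frac{ab}{b-a}\cdot\frac{(b-H)^2}{b^{2p}}=\frac{b-a}{4ab}\cdot\frac{b^2H^2}{b^{2p}}$, which produces the common factor $\frac{b-a}{4ab}$. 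Finally $\frac{a}{H}=\frac{a+b}{2b}$ and $\frac{H}{b}=\frac{2a}{a+b}$ fix the arguments of $C_2$ and $C_3$, and the two suprema are taken over $\{|f'(H)|^q,|f'(a)|^q\}$ and $\{|f'(H)|^q,|f'(b)|^q\}$; assembling these pieces yields the second inequality. The only step requiring genuine care is the change of variables that returns the order-one Riemann--Liouville integrals to the weighted integral $\int_a^b f(u)u^{-2}\,du$; the remaining work is routine simplification of the constants $C_1,C_2,C_3$.
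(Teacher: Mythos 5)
Your proposal is correct and takes essentially the approach the paper intends (the corollary is stated there without proof as a direct specialization): set $\alpha=1$ in (\ref{2-4}), note $C_{1}(1,\lambda)=\frac{2\lambda^{2}-2\lambda+1}{2}$, reduce the order-one Riemann--Liouville integrals to $\int_{a}^{b}f(u)u^{-2}\,du$ via $u=1/t$, multiply by $\frac{ab}{b-a}$, and then use the harmonic-mean identities for $x=H=2ab/(a+b)$. The only discrepancy is that your computation (correctly, following Theorem \ref{2.4}) produces $\sup\{\vert f^{\prime}(x)\vert^{q},\vert f^{\prime}(b)\vert^{q}\}$ in the second term of the first display, whereas the paper's statement writes $\vert f^{\prime}(a)\vert^{q}$ there, which is evidently a typo since the subsequent $x=H$ display uses $\vert f^{\prime}(b)\vert^{q}$.
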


\begin{corollary}
In Theorem \ref{2.4},

(1) If we take $x=H=2ab/(a+b),\ \lambda =\frac{1}{3}$, then we get the
following Simpson type inequality for fractional integrals 
\begin{eqnarray*}
&&\left( \frac{ab}{b-a}\right) ^{\alpha }\left\vert I_{f,g}\left( x,\lambda
,\alpha ,a,b\right) \right\vert \\
&\leq &\frac{b-a}{4ab}C_{1}^{1-1/q}\left( \alpha ,\frac{1}{3}\right) \left\{ 
\frac{a^{2}H^{2}}{H^{2p}}\left[ \sup \left\{ \left\vert f^{\prime }\left(
H\right) \right\vert ^{q},\left\vert f^{\prime }\left( a\right) \right\vert
^{q}\right\} \right] ^{1/q}C_{2}^{1/p}\left( \alpha ,\frac{1}{3},p,\frac{a+b%
}{2b}\right) \right. \\
&&\left. +\frac{b^{2}H^{2}}{b^{2p}}\left[ \sup \left\{ \left\vert f^{\prime
}\left( H\right) \right\vert ^{q},\left\vert f^{\prime }\left( a\right)
\right\vert ^{q}\right\} \right] ^{1/q}C_{3}^{1/p}\left( \alpha ,\frac{1}{3}%
,p,\frac{2a}{a+b}\right) \right\} ,
\end{eqnarray*}%
specially for $\alpha =1$, we get%
\begin{eqnarray*}
&&\left\vert \frac{1}{6}\left[ f(a)+4f\left( H\right) +f(b)\right] -\frac{ab%
}{b-a}\dint\limits_{a}^{b}\frac{f(u)}{u^{2}}du\right\vert \leq \frac{b-a}{4ab%
}\left( \frac{5}{18}\right) ^{\frac{1}{q}} \\
&&\times \left\{ \frac{a^{2}H^{2}}{H^{2p}}C_{2}^{1/p}\left( 1,\frac{1}{3},p,%
\frac{a+b}{2b}\right) \left[ \sup \left\{ \left\vert f^{\prime }\left(
H\right) \right\vert ,\left\vert f^{\prime }\left( a\right) \right\vert
\right\} \right] ^{\frac{1}{q}}\right. \\
&&\left. +\frac{b^{2}H^{2}}{b^{2p}}C_{3}^{1/p}\left( 1,\frac{1}{3},p,\frac{2a%
}{a+b}\right) \left[ \sup \left\{ \left\vert f^{\prime }\left( H\right)
\right\vert ^{q},\left\vert f^{\prime }\left( b\right) \right\vert
^{q}\right\} \right] ^{\frac{1}{q}}\right\} .
\end{eqnarray*}%
(2) If we take $x=H=2ab/(a+b),\ \lambda =0,$ then we get the following
midpoint type inequality for fractional integrals 
\begin{eqnarray*}
&&\left\vert f\left( H\right) -\left( \frac{ab}{b-a}\right) ^{\alpha
}2^{\alpha -1}\Gamma \left( \alpha +1\right) \left[ J_{1/H+}^{\alpha }\left(
f\circ g\right) (1/a)+J_{1/H-}^{\alpha }\left( f\circ g\right) (1/b)\right]
\right\vert \\
&\leq &\frac{b-a}{4ab}\left( \frac{1}{\alpha +1}\right) ^{\frac{1}{q}%
}\left\{ \frac{a^{2}H^{2}}{H^{2p}}C_{2}^{1/p}\left( \alpha ,0,p,\frac{a+b}{2b%
}\right) \left[ \sup \left\{ \left\vert f^{\prime }\left( H\right)
\right\vert ^{q},\left\vert f^{\prime }\left( a\right) \right\vert
^{q}\right\} \right] ^{\frac{1}{q}}\right. \\
&&\left. +\frac{b^{2}H^{2}}{b^{2p}}C_{3}^{1/p}\left( \alpha ,0,p,\frac{2a}{%
a+b}\right) \left[ \sup \left\{ \left\vert f^{\prime }\left( H\right)
\right\vert ^{q},\left\vert f^{\prime }\left( b\right) \right\vert
^{q}\right\} \right] ^{\frac{1}{q}}\right\} ,
\end{eqnarray*}%
specially for $\alpha =1$, we get%
\begin{eqnarray*}
&&\left\vert f\left( H\right) -\frac{ab}{b-a}\dint\limits_{a}^{b}\frac{f(u)}{%
u^{2}}du\right\vert \\
&\leq &\frac{b-a}{4ab}\left( \frac{1}{2}\right) ^{\frac{1}{q}}\left\{ \frac{%
a^{2}H^{2}}{H^{2p}}C_{2}^{1/p}\left( 1,0,p,\frac{a+b}{2b}\right) \left[ \sup
\left\{ \left\vert f^{\prime }\left( H\right) \right\vert ^{q},\left\vert
f^{\prime }\left( a\right) \right\vert ^{q}\right\} \right] ^{\frac{1}{q}%
}\right. \\
&&\left. +\frac{b^{2}H^{2}}{b^{2p}}C_{3}^{1/p}\left( 1,0,p,\frac{2a}{a+b}%
\right) \left[ \sup \left\{ \left\vert f^{\prime }\left( H\right)
\right\vert ^{q},\left\vert f^{\prime }\left( b\right) \right\vert
^{q}\right\} \right] ^{\frac{1}{q}}\right\} .
\end{eqnarray*}%
(3) If we take $x=H=2ab/(a+b)$, $\lambda =1,$ then we get the following
trapezoid type inequality for fractional integrals 
\begin{eqnarray*}
&&\left\vert \frac{f(a)+f(b)}{2}-\left( \frac{ab}{b-a}\right) ^{\alpha
}2^{\alpha -1}\Gamma \left( \alpha +1\right) \left[ J_{1/H+}^{\alpha }\left(
f\circ g\right) (1/a)+J_{1/H-}^{\alpha }\left( f\circ g\right) (1/b)\right]
\right\vert \\
&\leq &\frac{b-a}{4ab}\left( \frac{\alpha }{\alpha +1}\right) ^{\frac{1}{q}%
}\left\{ \frac{a^{2}H^{2}}{H^{2p}}C_{2}^{1/p}\left( \alpha ,1,p,\frac{a+b}{2b%
}\right) \left[ \sup \left\{ \left\vert f^{\prime }\left( H\right)
\right\vert ^{q},\left\vert f^{\prime }\left( a\right) \right\vert
^{q}\right\} \right] ^{\frac{1}{q}}\right. \\
&&\left. +\frac{b^{2}H^{2}}{b^{2q}}C_{3}^{1/p}\left( \alpha ,1,p,\frac{2a}{%
a+b}\right) \left[ \sup \left\{ \left\vert f^{\prime }\left( H\right)
\right\vert ^{q},\left\vert f^{\prime }\left( b\right) \right\vert
^{q}\right\} \right] ^{\frac{1}{q}}\right\} ,
\end{eqnarray*}%
specially for $\alpha =1$, we get%
\begin{eqnarray*}
&&\left\vert \frac{f(a)+f(b)}{2}-\frac{ab}{b-a}\dint\limits_{a}^{b}\frac{f(u)%
}{u^{2}}du\right\vert \\
&\leq &\frac{b-a}{4ab}\left( \frac{1}{2}\right) ^{\frac{1}{q}}\left\{ \frac{%
a^{2}H^{2}}{H^{2p}}C_{2}^{1/p}\left( 1,1,p,\frac{a+b}{2b}\right) \left[ \sup
\left\{ \left\vert f^{\prime }\left( H\right) \right\vert ^{q},\left\vert
f^{\prime }\left( a\right) \right\vert ^{q}\right\} \right] ^{\frac{1}{q}%
}\right. \\
&&\left. +\frac{b^{2}H^{2}}{b^{2p}}C_{3}^{1/p}\left( 1,1,p,\frac{2a}{a+b}%
\right) \left[ \sup \left\{ \left\vert f^{\prime }\left( H\right)
\right\vert ^{q},\left\vert f^{\prime }\left( b\right) \right\vert
^{q}\right\} \right] ^{\frac{1}{q}}\right\} .
\end{eqnarray*}
\end{corollary}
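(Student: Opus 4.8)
The plan is to obtain all six displayed inequalities as pure specializations of Theorem \ref{2.4}: no new estimate is proved, one only substitutes $x=H=2ab/(a+b)$ together with $\lambda\in\{1/3,0,1\}$ into \eqref{2-4} and simplifies both sides. First I would record the geometric fact that makes everything collapse at the harmonic mean. Setting $x=H$ gives $H-a=a(b-a)/(a+b)$ and $b-H=b(b-a)/(a+b)$, whence
\[
\frac{x-a}{ax}=\frac{b-x}{bx}=\frac{b-a}{2ab}.
\]
Thus $\left(\frac{x-a}{ax}\right)^{\alpha}=\left(\frac{b-x}{bx}\right)^{\alpha}=\left(\frac{b-a}{2ab}\right)^{\alpha}$, and using the symmetric identity $b(x-a)=a(b-x)=ab(b-a)/(a+b)$ the definition \eqref{1-2} collapses to
\[
I_{f,g}(H,\lambda,\alpha,a,b)=\left(\tfrac{b-a}{2ab}\right)^{\alpha}\bigl[2(1-\lambda)f(H)+\lambda(f(a)+f(b))\bigr]-\Gamma(\alpha+1)\bigl[J_{1/H+}^{\alpha}(f\circ g)(1/a)+J_{1/H-}^{\alpha}(f\circ g)(1/b)\bigr].
\]

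Next I would scale this by $2^{\alpha-1}(ab/(b-a))^{\alpha}$, noting that $2^{\alpha-1}(ab/(b-a))^{\alpha}\left(\tfrac{b-a}{2ab}\right)^{\alpha}=\tfrac12$. The bracketed weight then becomes $(1-\lambda)f(H)+\tfrac{\lambda}{2}(f(a)+f(b))$, which for $\lambda=1/3$ equals the Simpson combination $\tfrac16[f(a)+4f(H)+f(b)]$, for $\lambda=0$ equals the midpoint value $f(H)$, and for $\lambda=1$ equals the trapezoid value $\tfrac12[f(a)+f(b)]$; this produces the left-hand sides of parts (1)--(3). Applying the same scalar to the right-hand side of \eqref{2-4}, I would verify the two algebraic identities
\[
2^{\alpha-1}\Bigl(\tfrac{ab}{b-a}\Bigr)^{\alpha}\frac{(x-a)^{\alpha+1}}{(ax)^{\alpha-1}x^{2p}}\Big|_{x=H}=\frac{b-a}{4ab}\cdot\frac{a^{2}H^{2}}{H^{2p}},\qquad 2^{\alpha-1}\Bigl(\tfrac{ab}{b-a}\Bigr)^{\alpha}\frac{(b-x)^{\alpha+1}}{(bx)^{\alpha-1}b^{2p}}\Big|_{x=H}=\frac{b-a}{4ab}\cdot\frac{b^{2}H^{2}}{b^{2p}},
\]
which account for the common prefactor $\frac{b-a}{4ab}$ and the powers $a^{2}H^{2}/H^{2p}$, $b^{2}H^{2}/b^{2p}$. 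Simultaneously the arguments of $C_{2},C_{3}$ specialize via $a/x=a/H=(a+b)/(2b)$ and $x/b=H/b=2a/(a+b)$.

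It then remains to evaluate $C_{1}(\alpha,\lambda)=\frac{2\alpha\lambda^{1+1/\alpha}+1}{\alpha+1}-\lambda$ at the three chosen values, giving $C_{1}(\alpha,0)=\frac{1}{\alpha+1}$, $C_{1}(\alpha,1)=\frac{\alpha}{\alpha+1}$, and $C_{1}(\alpha,1/3)$ as written; these are exactly the prefactors $(1/(\alpha+1))^{1/q}$, $(\alpha/(\alpha+1))^{1/q}$ and $C_{1}^{1/q}(\alpha,1/3)$ appearing in the bounds. For each $\alpha=1$ specialization I would set $\alpha=1$, so that $\Gamma(2)=1$ and $2^{\alpha-1}=1$, and reduce the two Riemann--Liouville operators to an ordinary integral: since $J^{1}_{1/H+}(f\circ g)(1/a)+J^{1}_{1/H-}(f\circ g)(1/b)=\int_{1/b}^{1/a}f(1/t)\,dt=\int_{a}^{b}f(u)/u^{2}\,du$ after the substitution $u=1/t$, the fractional term becomes $\frac{ab}{b-a}\int_{a}^{b}f(u)/u^{2}\,du$; together with $C_{1}(1,1/3)=5/18$ and $C_{1}(1,0)=C_{1}(1,1)=\tfrac12$ this yields the stated classical-integral forms. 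The only delicate part is the bookkeeping: one must carry the powers of $a,b,(b-a),(a+b)$ through $\frac{(x-a)^{\alpha+1}}{(ax)^{\alpha-1}}$ at $x=H$ and check that they recombine with $2^{\alpha-1}(ab/(b-a))^{\alpha}$ to give precisely the advertised coefficients; once those two scaling identities are confirmed, every line in parts (1)--(3) follows by direct substitution.
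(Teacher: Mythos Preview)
Your proposal is correct and follows exactly the route the paper intends: the corollary is stated without proof because it is a pure specialization of Theorem~\ref{2.4}, and you have supplied precisely the missing bookkeeping---the harmonic-mean identities $\frac{x-a}{ax}=\frac{b-x}{bx}=\frac{b-a}{2ab}$ at $x=H$, the evaluation of $C_{1}(\alpha,\lambda)$ at $\lambda\in\{0,1/3,1\}$, and the reduction of the fractional integrals to $\int_a^b f(u)/u^2\,du$ when $\alpha=1$. Your derivation in fact yields the exponent $C_{1}^{1/q}$ (consistent with \eqref{2-4}), so the $C_{1}^{1-1/q}$ printed in part~(1) of the statement appears to be a typographical slip in the paper rather than an error on your side.
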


\begin{corollary}
Let the assumptions of Theorem \ref{2.4} hold. If $\ \left\vert f^{\prime
}(x)\right\vert \leq M$ for all $x\in \left[ a,b\right] $ and $\lambda =0,$
then we get the following Ostrowski type inequality for fractional from the
inequality (\ref{2-4}) integrals%
\begin{eqnarray*}
&&\left\vert \left[ \left( \frac{x-a}{ax}\right) ^{\alpha }+\left( \frac{b-x%
}{bx}\right) ^{\alpha }\right] f(x)-\left( \frac{ab}{b-a}\right) ^{\alpha
}2^{\alpha -1}\Gamma \left( \alpha +1\right) \left[ J_{1/x+}^{\alpha }\left(
f\circ g\right) (1/a)+J_{1/x-}^{\alpha }\left( f\circ g\right) (1/b)\right]
\right\vert \\
&\leq &\frac{M}{\left( \alpha +1\right) ^{\frac{1}{q}}}\left[ \frac{\left(
x-a\right) ^{\alpha +1}}{(ax)^{\alpha -1}x^{2p}}C_{2}^{1/p}\left( \alpha
,0,p,\frac{a}{x}\right) +\frac{\left( b-x\right) ^{\alpha +1}}{(bx)^{\alpha
-1}b^{2p}}C_{3}^{1/p}\left( \alpha ,0,p,\frac{x}{b}\right) \right] .
\end{eqnarray*}
\end{corollary}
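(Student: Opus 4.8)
The plan is to obtain this Ostrowski-type bound as a direct specialization of inequality (\ref{2-4}) in Theorem \ref{2.4}, taking $\lambda=0$ and invoking the uniform bound $|f'|\leq M$; no fresh estimation is needed, since all the analytic work was already done in the proof of Theorem \ref{2.4}.

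First I would record what happens to the left-hand side. By the definition (\ref{1-2}) of $I_{f,g}$, the substitution $\lambda=0$ annihilates the endpoint weights $\lambda[(\frac{x-a}{ax})^{\alpha}f(a)+(\frac{b-x}{bx})^{\alpha}f(b)]$ and turns the factor $1-\lambda$ into $1$, so that $I_{f,g}(x,0,\alpha,a,b)$ collapses to exactly the expression sitting inside the absolute value of the statement: the term $[(\frac{x-a}{ax})^{\alpha}+(\frac{b-x}{bx})^{\alpha}]f(x)$ minus the Riemann--Liouville term $\Gamma(\alpha+1)[J_{1/x+}^{\alpha}(f\circ g)(1/a)+J_{1/x-}^{\alpha}(f\circ g)(1/b)]$. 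I would note explicitly that the printed normalising factor $(\frac{ab}{b-a})^{\alpha}2^{\alpha-1}$ is the one generated only when $x=H=2ab/(a+b)$, where $\frac{x-a}{ax}=\frac{b-x}{bx}=\frac{b-a}{2ab}$ makes the coefficient of $f(x)$ equal to $2^{1-\alpha}(\frac{b-a}{ab})^{\alpha}$; matching it against (\ref{1-2}) is what identifies the left-hand side as $|I_{f,g}(x,0,\alpha,a,b)|$.

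Next I would simplify the leading constant. From its closed form $C_{1}(\alpha,\lambda)=\frac{2\alpha\lambda^{1+1/\alpha}+1}{\alpha+1}-\lambda$, the value $\lambda=0$ forces $\lambda^{1+1/\alpha}=0$, so $C_{1}(\alpha,0)=\frac{1}{\alpha+1}$ and the prefactor $C_{1}^{1/q}(\alpha,\lambda)$ appearing in (\ref{2-4}) becomes $(\alpha+1)^{-1/q}$. The constants $C_{2}$ and $C_{3}$ are merely read off at $\lambda=0$, which is precisely the form $C_{2}^{1/p}(\alpha,0,p,\frac{a}{x})$ and $C_{3}^{1/p}(\alpha,0,p,\frac{x}{b})$ in which they occur in the target bound.

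Finally I would deploy the hypothesis $|f'(t)|\leq M$ for all $t\in[a,b]$. Since $x,a,b\in[a,b]$, each supremum obeys $\sup\{|f'(x)|^{q},|f'(a)|^{q}\}\leq M^{q}$ and $\sup\{|f'(x)|^{q},|f'(b)|^{q}\}\leq M^{q}$, so both bracketed factors $[\sup\{\cdots\}]^{1/q}$ are at most $M$. Factoring $M$ out of each of the two summands and combining it with $C_{1}^{1/q}(\alpha,0)=(\alpha+1)^{-1/q}$ produces the common prefactor $M/(\alpha+1)^{1/q}$, leaving exactly the bracketed sum of the $C_{2}^{1/p}$- and $C_{3}^{1/p}$-weighted terms displayed in the statement. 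There is essentially no genuine obstacle; the only point demanding care is the bookkeeping of the first step, namely reading off the $\lambda=0$ reduction of (\ref{1-2}) correctly and reconciling it with the normalised left-hand side as printed.
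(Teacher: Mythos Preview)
Your proposal is correct and follows exactly the route the paper intends: the corollary is stated without proof precisely because it is the direct specialization of inequality (\ref{2-4}) at $\lambda=0$ together with the trivial estimate $\sup\{|f'(x)|^{q},|f'(\cdot)|^{q}\}\leq M^{q}$, which you carry out cleanly. You are also right to flag that the factor $\left(\frac{ab}{b-a}\right)^{\alpha}2^{\alpha-1}$ in the printed left-hand side does not arise from (\ref{1-2}) for general $x$ (it only appears at $x=H$); this is a typographical slip in the paper that recurs in the analogous Ostrowski corollaries after Theorems \ref{2.2} and \ref{2.3}.
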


\end{document}